\DeclareMathOperator{\conv}{conv}
\DeclareMathOperator{\Rg}{Rank}
\DeclareMathOperator{\Ker}{Ker}
\DeclareMathOperator*{\argmin}{arg\,min}
\newcommand{\R}{\mathbb{R}}
\newcommand{\Cc}{\mathbb{C}}
\theoremstyle{definition}
\newtheorem{theorem}{Theorem}[section]
\theoremstyle{definition}
\newtheorem{proposition}[theorem]{Proposition}
\theoremstyle{definition}
\newtheorem{definition}{Definition}[section]
\title{Geometry of quadratic maps via convex relaxation}
\author[1,2]{Anatoly Dymarsky}
\author[1,3]{Elena Gryazina}
\author[1,4]{Sergei Volodin}
\author[3]{Boris Polyak}
\affil[1]{Skolkovo Institute of Science and Technology}
\affil[2]{University of Kentucky}
\affil[3]{Institute for Control Sciences RAS}
\affil[4]{\'Ecole Polytechnique F\'ed\'erale de Lausanne}
\begin{document}
\maketitle

\begin{abstract}
We consider several basic questions pertaining to the geometry of image of a general quadratic map.
In general the image of a quadratic map is non-convex, although there are several known classes of quadratic maps when the image is convex.
Remarkably, even when the image is not convex it often exhibits hidden convexity -- a surprising efficiency of convex relaxation to address various geometric questions by reformulating them in terms of convex optimization problems.
In this paper we employ this strategy and put forward several algorithms that solve the following problems pertaining to the image: verify if a given point does not belong to the image; find the boundary point of the image lying in a particular direction; stochastically check if the image is convex, and if it is not, find a maximal convex subset of the image. 
Proposed algorithms are implemented in the form of an open-source MATLAB library \href{http://github.com/sergeivolodin/CAQM}{CAQM}, which accompanies the paper.
Our results can be used for various problems of discrete optimization, uncertainty analysis, physical applications, and study of power flow equations.

{\bf Keywords:} Quadratic Maps, \and Convexity, \and Convex Relaxation, \and Power Flow Equations
\end{abstract}

\section{Introduction}
\label{intro}
In this paper we discuss geometric properties of images of general real-valued quadratic maps.
Full image of a quadratic map is an unbounded set in $\R^m$ with its boundary being an appropriate real algebraic variety.
There are several basic questions pertaining to the geometry of quadratic maps which we address below.
First question is the feasibility of a given point, i.e.~if a particular point in $\R^m$ belongs to the image of a given quadratic map.
Second question is to identify a point on the boundary of the image that would lie on a given ray in $\R^m$.
Third question is to verify if the full image is convex, and, if not, to identify a maximal possible convex subset within it.

These and related questions are of obvious practical importance.
They naturally arise in the problems of discrete optimization \cite{luo2010semidefinite, Zhang}, uncertainty analysis \cite{packard1993complex}, and problems related to Power Flow study \cite{LavaeiLow}.
In particular, discrete optimization over a boolean variable $x \in \{-1, 1\}$ can be reduced to a continious case using quadratic constraint $x^2 = 1$.
Similarly, in control theory, the $\mu$-based methods (so-called $\mu$-analysis and synthesis) have proved  useful for the performance analysis of linear feedback systems under uncertainty \cite{packard1993complex}. In this case the quantity of interest is the structured singular value $\mu$. It is easy to calcualte an upper bound on $\mu$ via convex optimization, but the latter becomes exact whenever the corresponding quadratic map is convex \cite{Doyle96}, \cite{zhou1996robust}.

The geometric problems outlined above are usually difficult to solve. In fact, some of these  problems are known to be NP-hard \cite{Ramana_NPhard94}.
Hence it is highly desirable to develop theoretical and numerical approaches which may rely on peculiarities of a particular formulation and yield an efficient, if not universal, tool to address these questions.
In general the image of a quadratic map is non-convex, although there are a few known classes of quadratic maps with convex images.
Nevertheless often quadratic maps exhibit "hidden convexity" which can be understood heuristically as an unexpected efficiency of various convex relaxations.
Sometimes this efficiency can be justified theoretically \cite{luo2010semidefinite}. 

One of the important geometric notions which we employ and further develop in this paper is of boundary non-convexity \cite{Dymarsky_cuttingArX}.
Combining it with the ideas of convex relaxation and Linear Matrix Inequality (LMI) we formulate a number of algorithms to address the questions outlined above, as well as some other mathematical problems, which are of interest in their own right.
The algorithms proposed in this paper are implemented in an open-source MATLAB library \href{http://github.com/sergeivolodin/CAQM}{Convex Analysis of Quadratic Maps (CAQM)}, which accompanies the paper.





\label{known_facts}




\section{Notations}
\label{Notations}
We start with the definition of a quadratic map.
\begin{enumerate}
\item Real case, the map $f\colon \mathbb{R}^n\to\mathbb{R}^m, f=(f_1,\dots, f_m)$
\begin{equation}
f_k(x)=x^TA_k x+2b_k^Tx ,\quad A_k=A_k^T ,\quad x, b_k\in \mathbb{R}^n ,\quad k=1\dots m . \label{real}
\end{equation}

\item Complex case, the map $f\colon \mathbb{C}^n\to\mathbb{R}^m$
\begin{equation}
f_k(x)=x^*A_k x+b_k^*x+x^*b_k ,\quad A_k=A_k^* ,\quad x, b_k\in \mathbb{C}^n ,\quad k=1\dots m ,\label{complex}
\end{equation}
\end{enumerate}
where $\cdot^*$ stands for a Hermitian conjugate.
We will use $\mathbb{V}$ in what follows to denote $\mathbb{R}^n$ or $\mathbb{C}^n$ depending on the context.
With some exceptions both cases will be treated in parallel, as most results equally apply to both real and complex $\mathbb{V}$.
By default we will assume complex case, and will specify when the real case should be treated differently.
Another related comment is that a complex map $f:{\mathbb C}^n \rightarrow {\mathbb R}^m$ can be trivially re-written as a real map $f:{\mathbb R}^{2n} \rightarrow {\mathbb R}^m$.
Although this would lead to exactly the same results in certain cases, there is an important difference between these two representations, which is discussed after Proposition \ref{th:noconv_cert}.

The geometric questions we are interested in are independent of the affine transformations of $x$ and $y=f(x)$.
That allows us to choose $f(x)$ in \eqref{real} and \eqref{complex} such that $f(0)=0$.
Furthermore shifting $x\rightarrow x-x_0$ also shifts $b_k\rightarrow b_k-A_k x_0$.
By saying that $b_k$ {\em is} or {\em is not} trivial we would emphasize that the system of linear equations $A_k x_0=b_k,\,k=1\dots m$, has or does not have a solution $x_0$.

It is convenient to introduce a standard Euclidean scalar product in $\mathbb{R}^m$, such that for two vectors $c,\,y\in \mathbb{R}^m$, $c\cdot y=\sum_{k=1}^m c_k\, y_k$.
To simplify the notations we extend that definition to a case when one of the arguments is a tensor. \\[-20pt]
\theoremstyle{definition}
\begin{definition}
For a vector $c=(c_1,...,c_m)$ and a tuple of vectors $b=(b_1,...,b_m), \ b_k \in \mathbb{V}$, or a tuple of $n\times n$ matrices $A=(A_1,...,A_m), \ A_k\in \mathbb{R}^{n\times n}$ or $\mathbb{C}^{n\times n}$, the dot product is defined as follows,
\begin{eqnarray}
c\cdot b=\sum\limits_{k=1}^m c_k\, b_k ,\qquad
c\cdot A=\sum\limits_{k=1}^m c_k\, A_k\ . \nonumber
\end{eqnarray}
\end{definition}

The main object we are going to study is the full image $F$ of $f$. It can be defined as a set of points $y\in \mathbb{R}^m$ such that the system of quadratic equations $y=f(x)$ has a solution $x$. $F$ is a non-trivial subset in $\mathbb{R}^m$.
To emphasize this interpretation of $F$ we will also call it the {\it feasibility set}. \\[-25pt]
\begin{definition}
$F$ is the full image of $f$,
	$$F=f(\mathbb{V})=\{y\in \mathbb{R}^m|\, \exists\, x\in \mathbb{V},\, y=f(x) \}\subseteq \mathbb{R}^m\ .$$
\end{definition}

\begin{definition} $G$ is the convex hull of $F$,
	$$G=\conv (F)\subset \mathbb{R}^m\ .$$
\end{definition}

To investigate geometric properties of $F$ we will often study the intersection of $F$ with a supporting hyperplane, which is specified by a normal vector $c$. \\[-25pt]

\begin{definition} $\partial F_c $ is the set of boundary points of $F$ "touched" by a supporting hyperplane with the normal vector $c\in\mathbb{R}^m$,
	$$\partial F_c=\argmin\limits_{y\in F}(c\cdot y)\ .$$
\end{definition}
\begin{definition} $\partial G_c$ is the set of boundary points of $G$ "touched" by a supporting hyperplane with the normal vector $c\in\mathbb{R}^m$,
	$$\partial G_c=\argmin\limits_{y\in G}(c\cdot y)\ .$$
\end{definition}
A priori a supporting hyperplane orthogonal to $c\in\mathbb{R}^m$ may not exist, in which case $\partial F_c$ and $\partial G_c$ would be empty.
There is a particular class of quadratic maps, which we, following \cite{Sheriff}, will call {\it definite}. For such maps there exists at least one vector $c\in \mathbb{R}^m$ such that $c\cdot A\succ 0$.\\[-25pt]
\begin{definition}\label{ex:c_plus}
	The set of all vectors $c\in \mathbb{R}^m$, such that $c\cdot A\succcurlyeq 0$ is denoted as ${\mathcal K}$, and
	$${\mathcal K}_+=\{c\in\R^m\,\big|\,c\cdot A\succ 0 \}=\mathcal K\setminus\partial \mathcal K\ .$$
\end{definition}
\vspace{-.2cm}
\noindent
The set ${\mathcal K}_+$ is a cone, and the position of $c$ within ${\mathcal K}$ defines the spectrum of $c\cdot A$.
When the map is definite, ${\mathcal K}_+$ has dimension $m$.
It is easy to see that $\partial F_c$ is non-empty only when $c\in {\mathcal K}$.
The opposite is also true, modulo an important subtlety.
If the map is definite and $c\in {\mathcal K}$ but $c\notin \partial {\mathcal K}$, it is easy to see that $c\cdot A\succ 0$ and $\partial F_c$ would consist of exactly one point.
When $c\in \partial {\mathcal K}$, there are two possibilities: $\partial F_c$ could be empty, or could include an infinite number of points.
In the latter case $\partial F_c$ might be non-convex  -- this is boundary non-convexity, which implies non-convexity of $F$.
In our approach to test the convexity of $F$ we will be looking specifically for such directions $c$\\[-25pt]
\begin{definition}\label{ex:c_minus}
	Set of vectors $c\in \mathbb{R}^m$, such that $\partial F_c$ is non-convex is denoted as $C_{\rm ncvx}$:
$$C_{\rm ncvx}=\{c\in\R^m\,\big|\,\mbox{set }\partial F_c\mbox{ is non-convex}\}\ .$$
\end{definition}
\vspace{-.2cm}
\noindent It can be easily seen that for definite maps $C_{\rm ncvx}\subset \partial {\mathcal K}$.
Clearly, if $C_{\rm ncvx}$ is not empty the corresponding set $F$ is not convex.
The opposite is also true up to some technicality. Thus, it was shown in \cite{Gutkin,Sheriff} for homogeneous $b_k=0$ maps and in \cite{Dymarsky_SmallBall,Dymarsky_cuttingArX} for the general case that, up to some additional conditions and technical details, the absence of boundary non-convexities can be supplemented by a topological argument to establish convexity of $F$.
Hence identifying boundary non-convexities of quadratic maps is sufficient to verify the convexity of $F$.

\begin{definition}
For symmetric or hermitian matrices we introduce the standard scalar product $\langle X,Y\rangle$ = $\trace(X\,Y)$.
\end{definition}

\section{Geometry and the role of convexity}
The main idea of this paper is to reformulate various questions pertaining to the geometry of $F$ in form of the optimization problems.
When $F=G$ is convex, the corresponding optimization problems would be the problems of convex optimization which allow for an efficient numerical solution.
The starting point is a rather standard observation that $F$ can be formulated as an image of an auxiliary {\it linear} map.
\begin{theorem}[]
The image $F$ of $f$ is also an image of the following linear map with one additional non-linear constraint \cite{Ramana}
\begin{eqnarray}
\label{FX}
F &=& \left\{ \mathcal{H}(X)\,|\, X\succeq 0,\, X_{n+1,n+1} = 1,\, {\rm rank}(X)=1\right\}\ ,\\
\mathcal{H}(X)&=&(\langle{H}_1,X\rangle, \langle{H}_2,X\rangle, \dots,
\langle{H}_m,X\rangle)^T\ ,\quad H_{k} = \left(\begin{array}{cc} A_k & b_k \\ b_k^* & 0 \\\end{array}\right)\ .
\end{eqnarray}
\end{theorem}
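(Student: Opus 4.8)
The plan is to use the standard lifting (moment-matrix) construction that linearizes a quadratic by passing to the outer product of the augmented coordinate vector. For a point $x\in\mathbb{V}$ I would introduce the augmented vector $\hat{x}=\bigl(\begin{smallmatrix} x \\ 1\end{smallmatrix}\bigr)\in\mathbb{V}^{n+1}$ and set $X=\hat{x}\,\hat{x}^{*}$. By construction $X$ is an $(n+1)\times(n+1)$ Hermitian (symmetric in the real case) matrix that is positive semidefinite, has rank exactly one, and whose bottom-right entry equals $\hat{x}_{n+1}\overline{\hat{x}_{n+1}}=1$. Thus every $x$ produces an $X$ satisfying all three constraints on the right-hand side of \eqref{FX}, and the burden is to match the images.

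Next I would verify that $\mathcal{H}(X)=f(x)$ for this $X$, which reduces the statement to a bookkeeping identity. Writing $X$ in block form as $\bigl(\begin{smallmatrix} xx^{*} & x \\ x^{*} & 1\end{smallmatrix}\bigr)$ and using $\langle H_k,X\rangle=\trace(H_k X)$, a direct computation of the block product yields $\trace(H_k X)=\trace(A_k xx^{*})+\trace(b_k x^{*})+b_k^{*}x$, and the cyclicity of the trace turns the first two terms into $x^{*}A_k x$ and $x^{*}b_k$. The result is $x^{*}A_k x + x^{*}b_k + b_k^{*}x$, which is exactly $f_k(x)$ from \eqref{complex} (and collapses to $x^{T}A_k x + 2b_k^{T}x$ in the real case). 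This establishes the inclusion of $F$ in the set on the right-hand side.

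For the reverse inclusion I would start from an arbitrary $X$ satisfying $X\succeq 0$, $\mathrm{rank}(X)=1$, and $X_{n+1,n+1}=1$, and factor it. A rank-one positive semidefinite matrix can always be written as $X=vv^{*}$ for some $v\in\mathbb{V}^{n+1}$. The normalization $X_{n+1,n+1}=|v_{n+1}|^{2}=1$ forces $|v_{n+1}|=1$, and since $v$ is determined only up to an overall unimodular scalar (a phase in the complex case, a sign in the real case) that leaves $vv^{*}$ unchanged, I may rescale $v$ so that $v_{n+1}=1$. Then $v=\hat{x}$ for the vector $x\in\mathbb{V}^{n}$ consisting of the first $n$ entries of $v$, and the computation of the previous paragraph shows $\mathcal{H}(X)=f(x)\in F$. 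Combining the two inclusions proves the claimed equality.

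The only genuinely delicate point is this last gauge-fixing step: recovering an honest preimage $x$ relies on the observation that the phase/sign freedom in the factorization $X=vv^{*}$ can be used to normalize the last coordinate to $1$, which is possible precisely because $X_{n+1,n+1}=1$ is nonzero; everything else is the trace identity. I would also note, as motivation for the rest of the paper, that the rank-one constraint is exactly what separates $F$ from its convex relaxation -- dropping it enlarges the feasible set to all positive semidefinite $X$ with $X_{n+1,n+1}=1$, and it is the image of this relaxed set under the linear map $\mathcal{H}$ that underlies the convex-optimization reformulations exploited throughout.
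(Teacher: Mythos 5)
Your proof is correct and complete: the forward direction via the lifted outer product $X=\hat{x}\hat{x}^{*}$ with the trace identity $\langle H_k,X\rangle=x^{*}A_kx+x^{*}b_k+b_k^{*}x$, and the reverse direction via factoring a rank-one PSD matrix as $vv^{*}$ and using the phase (or sign) freedom together with $|v_{n+1}|=1$ to normalize $v_{n+1}=1$, is exactly the standard argument. The paper itself gives no proof of this theorem --- it simply cites the Ramana--Goldman result --- so your write-up supplies the omitted details, and it does so by the same lifting construction that the cited work is based on; your closing remark about the rank constraint being what separates \eqref{FX} from the relaxation \eqref{GX} matches the paper's subsequent discussion.
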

\noindent where $X$ is a Hermitian $(n+1)\times(n+1)$ matrix $X = X^* \in \mathbb{C}^{(n+1)\times(n+1)}$ with entries $X_{ij}$. 
The condition $ {\rm rank}(X)=1$ is non-linear which makes the analysis of $F$ complicated, and the corresponding optimization problems non-convex.
Hence, the next crucial step is to substitute $F$ by its convex relaxation -- the convex hull $G$.
\begin{theorem}[]
Convex hull $G$ of $F$ is a convex relaxation of \eqref{FX} \cite{Ramana, PolyakGryazina17}
\begin{eqnarray}
\label{GX}
G = \text{conv}(F) = \{ \mathcal{H}(X)\,|\, X\succeq 0, X_{n+1, n+1} = 1\}\ .
\end{eqnarray}
The only difference between \eqref{GX} and \eqref{FX} is that the non-linear constraint ${\rm rank}(X)=1$ is removed.
Now $X$ only satisfies linear matrix inequality $X\succeq 0$ and an additional linear constraint $X_{n+1, n+1} = 1$ which makes the space of $X$ convex.
This is important as it allows to formulate various geometrical questions about $G$ in terms of convex optimization problems in the space of $X$.
As we will see shortly these optimization problems would often have a standard form, extensively discussed in the literature previously \cite{Boyd}.
\end{theorem}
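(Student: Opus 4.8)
The statement to establish is the set equality $G=\conv(F)=\{\mathcal H(X)\mid X\succeq 0,\ X_{n+1,n+1}=1\}$; write $\widetilde G$ for the set on the right-hand side. The plan is to prove the two inclusions $\conv(F)\subseteq\widetilde G$ and $\widetilde G\subseteq\conv(F)$ separately, the first being essentially formal and the second carrying all the substance.

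For the easy inclusion I would first observe that the feasible set $\{X\mid X\succeq 0,\ X_{n+1,n+1}=1\}$ is convex, being the intersection of the positive semidefinite cone with an affine hyperplane, and that $\mathcal H$ is linear; hence its image $\widetilde G$ is convex. By the previous theorem, $F$ is precisely this image subject to the extra condition ${\rm rank}(X)=1$, so $F\subseteq\widetilde G$. Since $\conv(F)$ is by definition the smallest convex set containing $F$, the inclusion $\conv(F)\subseteq\widetilde G$ follows immediately.

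The core is the reverse inclusion. Given any admissible $X$, I would block-decompose it as $X=\begin{pmatrix}B & c\\ c^* & 1\end{pmatrix}$ with $B=B^*$ and $c\in\mathbb V$; positivity together with $X_{n+1,n+1}=1$ yields, via the Schur complement, $B-cc^*\succeq 0$. The goal is to exhibit $\mathcal H(X)$ as a genuine finite convex combination of image points. To that end I would factor $B-cc^*=\sum_{k=1}^{r}d_kd_k^*$ with $r={\rm rank}(B-cc^*)$ and form the $2r$ vectors $z_k^{\pm}=c\pm\sqrt{r}\,d_k$, each carrying weight $1/(2r)$. A short computation then shows $\sum_{k}\tfrac{1}{2r}\big(z_k^+ + z_k^-\big)=c$ and $\sum_{k}\tfrac{1}{2r}\big(z_k^+(z_k^+)^*+z_k^-(z_k^-)^*\big)=cc^*+\sum_k d_kd_k^*=B$, so the rank-one matrices $y_k^{\pm}(y_k^{\pm})^*$ with $y_k^{\pm}=(z_k^{\pm},1)$ reproduce $X$ as a convex combination. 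Each such block has unit $(n+1,n+1)$ entry and rank one, hence by the previous theorem maps under $\mathcal H$ to a point $f(z_k^{\pm})\in F$; linearity of $\mathcal H$ then presents $\mathcal H(X)$ as a convex combination of points of $F$, i.e. $\mathcal H(X)\in\conv(F)$.

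I expect the symmetrized construction to be the crux. The naive route — diagonalizing $X$ through its eigenvectors — fails because eigenvectors with vanishing last coordinate produce rank-one terms whose $(n+1,n+1)$ entry is zero; these correspond to the homogeneous "recession" directions $u^*A\,u$ and do not lie in $F$, so they cannot appear directly as members of a convex combination of image points. The purpose of pairing $c+\sqrt{r}\,d_k$ with $c-\sqrt{r}\,d_k$ is precisely to absorb such directions into bona fide finite convex combinations of honest image points, which is what lets one conclude the \emph{exact} equality $\conv(F)=\widetilde G$ rather than an equality only up to taking closures.
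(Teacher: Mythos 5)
Your proposal is correct, but a comparison with ``the paper's proof'' is somewhat one-sided: the paper does not prove this theorem at all --- the theorem environment contains only commentary (that dropping ${\rm rank}(X)=1$ makes the feasible set of $X$ convex), and the mathematical content is deferred to the citations \cite{Ramana, PolyakGryazina17}. So what you have written is a self-contained argument filling in exactly what the paper outsources. Your structure is the right one: the inclusion $\conv(F)\subseteq\widetilde G$ is formal (linear image of a convex set, plus $F\subseteq\widetilde G$ from the rank-constrained representation), and the reverse inclusion is the substantive step. Your symmetrized splitting checks out: with $B-cc^*=\sum_{k=1}^r d_kd_k^*$ and $z_k^{\pm}=c\pm\sqrt{r}\,d_k$, the weighted sum $\sum_k\tfrac{1}{2r}\bigl(y_k^+(y_k^+)^*+y_k^-(y_k^-)^*\bigr)$ with $y_k^{\pm}=(z_k^{\pm},1)$ reproduces all three blocks of $X$ (the cross terms $\pm\sqrt{r}(d_kc^*+cd_k^*)$ cancel in pairs), each summand is rank one with unit corner, and $\mathcal H$ maps it to $f(z_k^{\pm})\in F$. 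Two small points worth adding: first, the degenerate case $r=0$ must be treated separately, since your weights $1/(2r)$ are then undefined --- but in that case $X=(c,1)(c,1)^*$ is itself rank one and $\mathcal H(X)=f(c)\in F$ directly; second, your closing remark is genuinely valuable and goes beyond what the paper records, because it explains why the naive eigendecomposition of $X$ fails (eigenvectors with vanishing last coordinate yield recession terms that are not images of points) and hence why the equality $G=\widetilde G$ holds exactly, with no closure needed --- a distinction the paper's citation-level treatment never surfaces.
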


Substituting $F$ with $G$ requires for $F$ to be convex, which is not always the case.
Nevertheless there are certain special cases, when $F$ is known to be convex.
One special class is the homogeneous maps $b_k=0$, or equivalently trivial $b_k$.
In this case convexity of image $F$ is closely related to the convexity of the image of a sphere. Indeed for the quadratic map $f$ we can introduce 
\begin{equation}
\label{Hdef}
H=\{y\in \mathbb{R}^m|\, \exists\, x\in \mathbb{V},\, |x|=1,\, y=f(x) \}\subseteq \mathbb{R}^m\ ,
\end{equation}
where $|x|$ stands for the Euclidean norm of $x$. The set $H$ is a a cross-section of the full image  $F\subset\mathbb{R}^{m+1}$ of the extended map $f=(f_1,\dots, f_m, f_{m+1}), f_{m+1}(x)=|x|^2$, with the hyperplane $y_{m+1}=1$. It is easy to see that convexity of $F$ implies the convexity of $H$ and vice versa. Similarly, for any definite quadratic map $f:{\mathbb R}^n\rightarrow {\mathbb R}^m$ convexity of $F$ can be reformualted as the convexity of image $H$ of an appropriate $(n-1)$-dimensional ellipsoid inside ${\mathbb R}^n$. Thus for homogenious maps it is  sufficient to consider convexity of the full image only.

For a few cases of homogenious $f$ listed below the convexity of $F$ and $H$ has been established analytically.
\begin{itemize}
\item If $m=2$, the map $f$ is homogeneous and $\mathbb{V}=\mathbb{C}^n$, then the image of the sphere $H$ \eqref{Hdef} is convex. This is a famous result by Hausdorff and Toeplitz \cite{Toeplitz,Hausdorff}. 

\item If $m=2$, the map $f$ is homogeneous and $\mathbb{V}=\mathbb{C}^n$, then $F$ is convex. This follows from the previous result by Hausdorff and Toeplitz. 

\item If $m=3$, the map $f$ is homogeneous and definite, and $\mathbb{V}=\mathbb{C}^n$, then $F$ is convex. This is also a corollary of the result by Hausdorff and Toeplitz.

\item If $m=2$, the map $f$ is homogeneous, and $\mathbb{V}=\mathbb{R}^n$, then $F$ is convex \cite{Dines}.

\item If $m=2$, the map $f$ is homogeneous, and $\mathbb{V}=\mathbb{R}^n$, $n\ge 3$ then the corresponding $H$ is convex \cite{Brickman}. 

\item If $m=3$, the map $f$ is homogeneous and definite and $\mathbb{V}=\mathbb{R}^n$, $n\ge 3$, then $F$ is convex \cite{Calabi,polyak98}.
Convexity of $F$ in this case is mathematically equivalent to the convexity of $H$ in the preceding case. 
  
\item If $m\ge 4$, the map $f$ is homogeneous and definite, satisfies a set of additional conditions, and $n\ge m$, then $F$ is convex \cite{Gutkin, Sheriff}.

\item If the map $f$ is homogeneous, $\mathbb{V}=\mathbb{R}^n$ with $n \geq 2$, and all matrices $A_i$ mutually commute, then $F$ is convex \cite{Fradkov}.

\item Some additional sufficient conditions for the convexity of $F$ for a homogeneous definite $f$ were formulated in \cite{Dymarsky_cuttingArX}.
\end{itemize}

When $b_k$ is non-trivial a few additional cases are known when $F$ is convex.

\begin{itemize}
\item If $m=2$, the map $f$ is definite and $\mathbb{V}=\mathbb{R}^n$, then $F$ is convex \cite{polyak98}.

\item If the map $f$ is definite and satisfies a set of additional conditions, which can be colloquially summarized as the absence of boundary non-convexities, with $n\ge m$, then $F$ is convex \cite{Dymarsky_cuttingArX}.
\end{itemize}

These criteria ensure that many quadratic maps which appear in practical applications are convex, e.g.~the solvability set of Power Flow equations for balanced distribution networks \cite{DymarskyTuritsyn}.
Moreover this list is likely to be incomplete, with many other maps $f$ which do not satisfy any of the aforementioned criteria have convex $F$.
The very practical complication here is that even if $F$ is convex, checking it for $m>2$ is NP-hard \cite{Ramana_NPhard94}.
One of the important results of this paper is a formulation of a stochastic algorithm which can detect and certify non-convexity of $F$ with a non-vanishing probability.
Hence running this algorithm for a sufficient time can ensure convexity of $F$ with almost complete certainty.

Another important observation is that even $F$ is not known to be convex, various optimization problems pertaining to $F$ can be very effectively solved in practice via convex relaxation, see e.g.~\cite{LavaeiLow}.
One possible explanation here is that although the full $F$ may not be convex, a subpart of it confined to a particular compact region which is important in the context of a particular application is convex.
A central result of this work is a numerical procedure which uses the stochastic algorithm mentioned above to identify a maximal compact subpart of $F$ which is likely to be convex.

Finally, we would like to mention that even when $F$ possesses no convexity properties, answering certain geometric questions about $G$ would suffice to establish a similar result about $F$.
Thus, establishing that a particular point $y\in \mathbb{R}^m$ does {\it not} belong to $G$ is also sufficient to show $y\notin F$. We formulate the algorithms which solve this and other problems below.

%
%
%

\section{Infeasibility certificate}
\label{certif}
In this and the next section we follow \cite{PolyakGryazina17}.
To check if a particular point $y^0\in \mathbb{R}^m$ is feasible, i.e.~belongs to $F$, we start with the analogous question for $G$.
The condition $y^0 \in G$ is equivalent to the following LMI being feasible, i.e.~the following system of (in)equalities admitting a solution,
\begin{equation}\label{eq:LMI1}
\mathcal{H}(X) = y^0, \quad X\succeq 0, \quad X_{n+1, n+1} = 1.
\end{equation}
Feasibility of this convex optimization problem can be verified efficiently \cite{Boyd}.
We prefer to formulate the same problem in dual terms.
If a point does not belong to a convex domain they can be always separated by an appropriate hyperplane.
This is illustrated in Fig.~\ref{fig:sep_hyp} below.
For a given vector $c\in \mathbb{R}^m$ we introduce the following matrix
\begin{equation}\label{eq:Hc} H(c) = \left(\begin{array}{cc} c\cdot A & c\cdot b \\ c\cdot b^* & -c \cdot y^0 \\ \end{array}\right).\end{equation}
\begin{figure}[htb]
\centerline{\includegraphics[width=5cm]{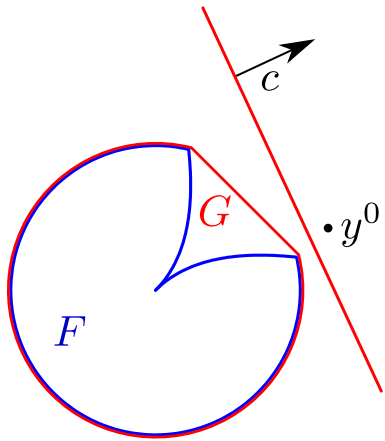}}
\caption{Infeasibility certificate via separating
hyperplane.}\label{fig:sep_hyp}
\end{figure}
\begin{theorem}[Sufficient condition of infeasibility]\label{th:suff_cond}
If for a given $y^0\in \mathbb{R}^m$ there exists $c\in \mathbb{R}^m$ such that $H(c)\succ 0$, then $y^0$ is infeasible with respect to $G$ and correspondingly with respect to $F$ \cite{PolyakGryazina17}.
\end{theorem}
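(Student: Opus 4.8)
The plan is to argue by contradiction, exploiting the dual pairing between the semidefinite description of $G$ and the matrix $H(c)$. First I would suppose, to the contrary, that $y^0\in G$. By the convex relaxation in \eqref{GX}, this means there exists a Hermitian matrix $X\succeq 0$ with $X_{n+1,n+1}=1$ such that $\mathcal H(X)=y^0$, i.e.\ $\langle H_k,X\rangle=y^0_k$ for every $k=1,\dots,m$. The goal is to show that such an $X$ cannot coexist with a $c$ satisfying $H(c)\succ 0$.

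The second step is to rewrite $H(c)$ so as to expose its relation to the constraints defining $G$. Using the definition of $H_k$, the combination $c\cdot H=\sum_k c_k H_k$ has upper-left block $c\cdot A$, off-diagonal blocks $c\cdot b$ and $c\cdot b^*$, and a \emph{zero} in the $(n+1,n+1)$ entry; thus $H(c)$ differs from $c\cdot H$ only in that corner, namely $H(c)=c\cdot H-(c\cdot y^0)\,E$, where $E$ denotes the matrix whose single nonzero entry is a $1$ in position $(n+1,n+1)$. I would then evaluate the trace pairing $\langle H(c),X\rangle=\trace(H(c)X)$ by linearity: on one hand $\langle c\cdot H,X\rangle=\sum_k c_k\langle H_k,X\rangle=\sum_k c_k y^0_k=c\cdot y^0$, and on the other $\langle E,X\rangle=X_{n+1,n+1}=1$. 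Hence $\langle H(c),X\rangle=(c\cdot y^0)-(c\cdot y^0)\cdot 1=0$.

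Finally I would derive the contradiction from positivity. Because $X_{n+1,n+1}=1$, the matrix $X$ is nonzero; decomposing $X=\sum_i\lambda_i v_iv_i^*$ with $\lambda_i\ge 0$ not all zero gives $\langle H(c),X\rangle=\sum_i\lambda_i\,v_i^*H(c)v_i>0$, since $H(c)\succ 0$ forces each nonzero term to be strictly positive. This contradicts the vanishing computed above, so no such $X$ exists and therefore $y^0\notin G$. Since $F\subseteq G$, it follows immediately that $y^0\notin F$, which is the assertion. The argument is essentially a Farkas-type separation in the space of Hermitian matrices and presents no genuine analytic difficulty; the only point requiring care is the \emph{strictness} of the inequality $\langle H(c),X\rangle>0$, which relies simultaneously on $H(c)\succ 0$ and on $X$ being a nonzero positive semidefinite matrix—and it is precisely the normalization $X_{n+1,n+1}=1$ that guarantees $X\neq 0$.
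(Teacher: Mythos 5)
Your proof is correct, but it runs along a genuinely different line from the paper's. The paper argues inside $\mathbb{R}^m$: via the Schur complement, $H(c)\succ 0$ is unpacked into $c\cdot A\succ 0$ together with $-c\cdot y^0-(c\cdot b)^*(c\cdot A)^{-1}(c\cdot b)>0$, and the latter quantity is recognized as $\min_x x^*(c\cdot A)x+2\Re(x^*(c\cdot b))=\min_{y\in F}(c\cdot y)$; this exhibits $c$ explicitly as the normal of a hyperplane strictly separating $y^0$ from $F$ (hence from $G=\conv F$). You instead work in the space of Hermitian matrices: assuming a feasibility witness $X\succeq 0$, $X_{n+1,n+1}=1$, $\mathcal H(X)=y^0$ from the SDP description \eqref{GX}, you write $H(c)=c\cdot H-(c\cdot y^0)E$ and derive the contradiction $0=\langle H(c),X\rangle>0$ — a weak-duality (Farkas-type) argument. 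Both computations are sound; note your argument only needs the easy inclusion $G\subseteq\{\mathcal H(X)\,|\,X\succeq 0,\ X_{n+1,n+1}=1\}$ (rank-one witnesses for points of $F$ plus convexity of the SDP-feasible set), not the full equality \eqref{GX}. What the paper's route buys is self-containedness (no appeal to the relaxation theorem) and the explicit geometric separation certificate, which matches Fig.~\ref{fig:sep_hyp} and feeds directly into the dual boundary-oracle formulation \eqref{dual_BO}; what your route buys is a cleaner algebraic argument that avoids the Schur complement and the explicit minimization of the quadratic, and it transparently isolates where each hypothesis enters — positivity of $H(c)$ for strictness, and the normalization $X_{n+1,n+1}=1$ for $X\neq 0$.
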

\begin{proof}
Via Schur complement $H(c)\succ 0$ $\Leftrightarrow$ $c\cdot A\succ 0$ and $-c\cdot y^0 - (c\cdot b)^* (c\cdot A)^{-1} (c\cdot b) >0$.
But the latter inequality means
\begin{eqnarray}\nonumber
c\cdot y^0 < - (c\cdot b)^* (c\cdot A)^{-1} (c\cdot b) = \min_x x^* (c\cdot A) x + 2\Re(x^* (c\cdot b)) = \min\limits_{y\in F}\, (c\cdot y)\ .
\end{eqnarray}
The latter condition means there exists a separating hyperplane, defined by its normal vector $c$, that strictly separates $y^0$ and $G={\rm conv}(F)$. Hence $y^0$ does not belong to $F$.\\
{\bf Corollary.}
If $F$ is convex, the sufficient condition given by Theorem \ref{th:suff_cond} is also necessary.
In case $F$ is non-convex, even if the premise of the theorem fails and hence $y\in G$, it does not imply anything about $y^0\in F$. 
\end{proof}

The algorithm certifying infeasibility of $y$ with respect to $G$ and $F$ based on Theorem \ref{th:suff_cond} is implemented in the accompanying library as \underline{\tt infeasibility\_oracle.m}.

\section{Non-convexity certificate}
One of the central questions is to verify convexity of $F$.
This task requires several distinct steps, each being of interest in their own right. The presentation of this section follows \cite{PolyakGryazina17}.

\subsection{Boundary non-convexity}
\label{boundary_nonconvexity}
The underlying idea of certifying non-convexity of $F$ is to find vector $c\in\mathbb{R}^m$ such that the corresponding set $\partial F_c$ is non-convex.
The geometry of $\partial F_c$ depends on the spectrum of $c\cdot A$.
First, if $c\cdot A$ is positive-definite the corresponding supporting hyperplane intersects $F$ at a unique point, hence $\partial F_c$ is convex.
Second, if $c\cdot A$ has negative eigenvalues, then $\partial F_c$ is empty because $F$ stretches to infinity in the directions along $-c$ and there is no corresponding supporting hyperplane in this case. Finally, when $c\cdot A$ is positive semi-definite and singular, $\partial F_c$ may consists of more than one point and hence can be non-convex provided that a few extra conditions are satisfied.

\begin{proposition}[Sufficient condition for non-convexity of $\partial F_c$]\label{th:noconv_cert}
If for $m \geq 3$, $n\ge 2$, matrix $c\cdot A$ is singular and positive semi-definite $c\cdot A \succeq 0$, $\dim(\Ker (c\cdot A))=1$, the equation $(c\cdot A)x_b=-c\cdot b$ has a solution, and for some $x_0\in\Ker (c\cdot A)$
vectors $v_k=(x_b^* A_k+b_k^*)x_0$ and $u_k=x_0^*\, A_k\, x_0$ are not collinear, then 
\begin{eqnarray}
\label{boundary}
\partial F_{c}=\{f(x_b+x_0)\,|\, x_0\in \Ker (c\cdot A)\}
\end{eqnarray}
is non-convex \cite{PolyakGryazina17}.
\end{proposition}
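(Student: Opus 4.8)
The plan is to reduce the description of $\partial F_c$ to an unconstrained quadratic minimization, read off an explicit parametrization of the minimizer set, and then exhibit the non-convexity by hand. First I would compute $c\cdot f(x)=x^*(c\cdot A)x+(c\cdot b)^*x+x^*(c\cdot b)$ and minimize over $x$. Since $c\cdot A\succeq 0$ and, by hypothesis, the linear system $(c\cdot A)x_b=-c\cdot b$ is solvable, completing the square about $x_b$ gives $c\cdot f(x_b+w)=w^*(c\cdot A)w+\text{const}$, whose minimum is attained exactly on $x_b+\Ker(c\cdot A)$. Because $\min_{y\in F}(c\cdot y)=\min_x c\cdot f(x)$ and a feasible $y=f(x)$ realizes this minimum iff $x$ is a minimizer, this yields $\partial F_c=\{f(x_b+x_0)\mid x_0\in\Ker(c\cdot A)\}$, i.e.\ formula (\ref{boundary}). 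This step is routine; the one thing to note is that solvability of $(c\cdot A)x_b=-c\cdot b$ is precisely the condition for the minimum to be finite, as opposed to the case $\partial F_c=\emptyset$.

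Next, using $\dim\Ker(c\cdot A)=1$ I would write $\Ker(c\cdot A)=\{z\,x_0\}$ and expand $f_k(x_b+z x_0)$. The cross terms organize as $f_k(x_b+z x_0)=f_k(x_b)+2\Re(z v_k)+\abs{z}^2 u_k$, where $v_k=(x_b^*A_k+b_k^*)x_0$ is the coefficient linear in $z$ and $u_k=x_0^*A_k x_0$ is the (real) curvature coefficient. Hence $\partial F_c$ is the image of a quadratic map of $z$: in the real case ($z=t\in\R$) a parabola $f(x_b)+2t v+t^2 u$, and in the complex case ($z\in\Cc$) a two-real-parameter paraboloid.

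The remaining task is to prove non-convexity of this image. In the real case this is clean: the two parameters $z=\pm 1$ give the points $f(x_b)\pm 2v+u$, distinct because $v\neq 0$ (a consequence of non-collinearity), and their midpoint $f(x_b)+u$ would lie on the parabola only if $2t v=(1-t^2)u$ for some $t$; when $v,u$ are not collinear this forces $2t=0$ and $1-t^2=0$ at once, a contradiction, so the midpoint is off the curve and $\partial F_c$ is non-convex. In the complex case the image is a genuinely curved surface, and I would argue that it cannot coincide with any flat (hence convex) planar region precisely when the curvature direction $u$ is transverse to the plane $\mathrm{span}_{\R}(\Re v,\Im v)$ swept out by the linear part; in that event a symmetric pair $z=\pm w$ has midpoint $f(x_b)+\abs{w}^2 u$ that escapes the surface by the same mechanism as above.

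The main obstacle is exactly this last point. Unlike the real parabola, the Hermitian kernel is one complex- (two real-) dimensional, so $\partial F_c$ is a two-parameter paraboloid whose image could, a priori, fill a convex region of a $2$-plane rather than bow out of it. Ruling this out requires that $\Re v,\Im v,u$ genuinely span three dimensions, which is what non-collinearity has to mean in the Hermitian setting and the reason the hypothesis $m\ge 3$ is needed: three ambient directions must be available for the surface to curve away from every plane. This is also the point flagged in the text where the complex map and its realification diverge, since passing to the real form of dimension $2n$ obscures the complex curvature that produces the non-convexity, so the certificate must be read in the genuinely complex picture.
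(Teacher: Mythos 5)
Your derivation follows the same route as the paper's: complete the square in $c\cdot f$ to identify $\partial F_c$ with $f\bigl(x_b+\Ker(c\cdot A)\bigr)$, expand in the kernel parameter to get the parabola $y_0+2\Re(v\,t)+u\,|t|^2$, and exhibit a midpoint that escapes the set. In the real case your argument coincides with the paper's, which merely asserts that a nondegenerate parabola is non-convex; your explicit contradiction $2tv=(1-t^2)u$ fills in that assertion correctly. The genuine divergence is the complex case, and there your version is the \emph{correct} one: the paper claims $\partial F_c$ is non-convex unless $\Re(v)$, $\Im(v)$, $u$ are all collinear, whereas you demand that $\Re(v),\Im(v),u$ span three dimensions, and you are right that nothing weaker suffices. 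Concretely, if $\Re(v),\Im(v)$ are linearly independent and $u=\alpha\Re(v)+\beta\Im(v)$ lies in their span, then writing $t=s+i\tau$ the set $\{2s\Re(v)-2\tau\Im(v)+(s^2+\tau^2)u\}$ is, in the coordinates of that $2$-plane, exactly the set of points on one side of a parabola, hence convex; likewise if $\Re(v)\parallel\Im(v)$ and $u$ is transverse to them, the image is the convex region enclosed by a parabola. So "not all three collinear," as stated in the paper, does not imply non-convexity; the three-dimensional-span hypothesis you isolate is what the symmetric-pair midpoint argument truly needs, and under it your proof closes. One small slip in your final remark: since $c\cdot u=x_0^*(c\cdot A)x_0=0$ and $c\cdot v=\bigl(x_b^*(c\cdot A)+(c\cdot b)^*\bigr)x_0=0$, all three vectors lie in the hyperplane $c^{\perp}$, so the complex certificate in fact requires $m\ge 4$; the threshold $m\ge 3$ appearing in the proposition is the right one only for the real case.
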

For the solution $x_b$ to exist, $\Ker(c\cdot A)$ has to be orthogonal to $c\cdot b$ which means that for each $x_0\in\Ker (c\cdot A)$, orthogonality condition must be satisfied $x_0(c\cdot b)^*=0$.
Then $\partial F_{c}$ is an image of one-dimensional space $x_b + t x_0$, 
\begin{eqnarray}
\partial F_{c}=f(x_b+ t x_0)=y_0+2\Re(v\,t)+u |t|^2\ ,\quad y_0=f(x_b)\ ,
\end{eqnarray}
where $x_0$ is any non-zero vector from $\Ker(c\cdot A)$.
Here we need to distinguish the complex case, $x\in \mathbb{C}^n$ and $t\in \mathbb{C}$, and the real one, $x\in \mathbb{R}^n$ and $t\in \mathbb{R}$.
In the latter case $\partial F_{c}$ would be non-convex unless two vectors $v$ and $u$ are collinear.
In the former case there are two real vectors $\Re(v)$ and $\Im(v)$.
Accordingly, $\partial F_{c}$ is non-convex unless all three vectors $\Re(v)$, $\Im(v)$, and $u$ are collinear.
Geometrically, $\partial F_c$ is a parabola (or parabolic surface in the complex case), which is not convex, unless it degenerates into a straight line.

Let us emphasize that in our analysis above we relied on $\dim(\Ker (c\cdot A))=1$.
If $\dim(\Ker (c\cdot A))>1$, the set $\partial F_{c}$ can potentially be convex even if $u$ and $v$ are not collinear (also notice in this case there might be multiple vectors $u$ and $v$).
Obviously, this criterion of non-convexity does not apply to the homogeneous (trivial $b_k$) case, since $v=0$ and is always collinear to $u$.
In this case potential boundary non-convexities are associated with the vectors $c$ for which $\dim(\Ker (c\cdot A))\ge 2$ (see Appendix \ref{non_convex_homogeneous} for further details).
Another important point here is that rewriting a complex map $f:{\mathbb C}^n \rightarrow {\mathbb R}^m$ with $\dim(\Ker (c\cdot A))=1$ as a real map $f:{\mathbb R}^{2n} \rightarrow {\mathbb R}^m$ would double the dimension of $\dim(\Ker (c\cdot A))=2$, thus rendering the Proposition \ref{th:noconv_cert} useless. 

Vectors $c$ which satisfy the conditions of the Proposition \ref{th:noconv_cert} obviously belong to set of all vectors $c$ associated with boundary non-convexities $C_{\rm ncvx}$, but might not exhaust it.
In the numerical approaches to identify boundary non-convexities we will be looking for vectors $c$ which belong to a broader set $C_-\supseteq C_{\rm ncvx}$,
\begin{align}
\label{cminorig}
C_-=\{c\in {\mathbb R}^m| c\cdot A\succeq 0,\ \dim(\Ker (c\cdot A))\ge 1,\ \forall\,x_0\in \Ker(c\cdot A), \ x_0^*(c\cdot b)=0\}\ .
\end{align}
The set $C_-$ is "larger" than $C_{\rm ncvx}$, but since the condition $\dim(\Ker (c\cdot A))=1$ is typical for singular $c\cdot A\succeq 0$ for a general map $f$, and also in the general case $u\nparallel v$, for practical purposes $C_-$ can be often "equated" with $C_{\rm ncvx}$.


To identify boundary non-convexity one can try to sample $c\in {\mathbb R}^m$, $|c|^2=1$ randomly in a hope to find $c\in C_-$ and then confirm non-convexity by checking $\dim(\Ker (c\cdot A))=1$ and non-collinearity of $u$ and $v$.
But since for a definite map $C_{-}\subset \partial {\mathcal K}\subset {\mathbb R}^m$ is a codimension one subspace in ${\mathbb R}^m$, the probability of accidentally "hitting" $c\in C_-$ is zero.
A much more efficient way to identify boundary non-convexities is outlined below.

\begin{figure}[h]
\centerline{\includegraphics[width=5cm]{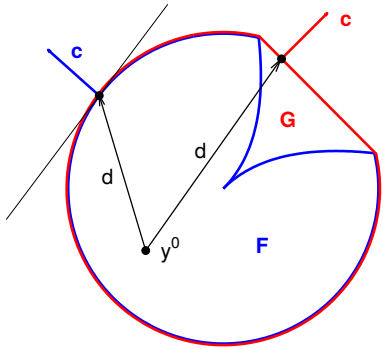}}
\caption{The idea behind identifying boundary non-convexities $c\in C_-$.}\label{fig:nonconv_idea}
\end{figure}

\subsection{Boundary oracle}
\label{sec:boundaryoracle}
The idea behind identifying boundary non-convexities is illustrated in Fig.~\ref{fig:nonconv_idea}.
Suppose we start with an internal point $y\in G$, choose a direction vector $d\in {\mathbb{R}}^m$ and identify a boundary point in that direction, $y+t d\in \partial G$, where $t$ is a numerical parameter $t\in {\mathbb R}$.
If this point happens to be a regular boundary point of $\partial F$, then locally around that point $\partial F$ and $\partial G$ coincide.
Accordingly, the supporting hyperplane which "touches" $G$ at $y+t d$ is a also a supporting hyperplane for $F$, $y+t d\in\partial F_c$ with some appropriate $c$.
In this case $\partial G_c=\partial F_c$ is convex and the corresponding $c\notin C_{\rm ncvx}$ (blue vector $c$ in Fig.~\ref{fig:nonconv_idea}).
On the contrary, if $y+t d \notin F$, since this point belongs to $G$, this implies that $F$ is not convex, $F\subsetneq G$.
We can further consider vector $c$ which is orthogonal to the supporting hyperplane to $G$ that includes $y+t d$, i.e.~$y+t d \in \partial G_c$.
Now if we consider $\partial F_c$ with the same $c$ it is {\it not} going to include $y+t d$ and will be non-convex (red vector $c$ in Fig.~\ref{fig:nonconv_idea}).

This observation provides an efficient way to identify boundary non-convexities of $F$: starting with an arbitrary point $y\in F$, randomly sample direction vectors $d$ and study the geometry near the boundary points $y+t d \in G$.

For the given $y\in G$ and $d\in {\mathbb R}^m$ the boundary point $y+t d \in \partial G$ can be efficiently obtained with help of the following Semidefinite Program (SDP) \cite{Boyd, PolyakGryazina17}
\begin{align}
\max ~t~~~~~~~& \label{BO_SPD}\\
\mathcal{H}(X) = y + t\,d,& \nonumber \\
X=X^*,\ X \succeq 0,& \nonumber\\
X_{n+1, n+1} = 1, & \nonumber
\end{align}
with variables $t\in \mathbb{R}$, $X \in \mathbb{V}^2$.
Note that this problem may not have a solution if $G$ stretches to infinity in the direction $d$.
If the solution of (\ref{BO_SPD}) satisfies $\Rg X = 1$, the corresponding boundary point of $G$ is also a boundary of $F$.
Otherwise, if $\Rg X = 1$ solution does not exist, the boundary point of the convex hull $G$ does not belong to $F$, signaling non-convexity of $F$. We note however that it is not straightforward to check  if $\Rg X=1$ solution exist as normally there are many solutions $X$ at which global optimum is achieved and standard optimization algorithms return only one of them.

The algorithm (\ref{BO_SPD}) to find boundary point of $G$ and verify if it belongs to $F$ is implemented in the accompanying library as \underline{\tt boundary\_oracle.m}.

There is also a dual formulation of the same problem which finds vector $c$, normal to the supporting hyperplane to $G$ that includes $y+t d$. It can be formulated in terms of the following SDP \cite{PolyakGryazina17}
\begin{align}
\min ~\gamma + (c\cdot y^0)& \label{dual_BO}\\
(c\cdot d) = -1 &\nonumber \\
H = \left(
\begin{array}{cc}
c\cdot A & c\cdot b \\
c\cdot b^* & \gamma \\
\end{array}
\right) &\succeq 0\ .
 \nonumber
\end{align}
This is a SDP in variables $c\in {\mathbb R}^m$ and $\gamma\in {\mathbb R}$.
As in the previous case this problem may not have a solution for certain $d$.
This algorithm is implemented in the accompanying library as \underline{\tt get\_c\_from\_d.m}.

\subsection{Non-convexity certificate}
\label{sec:nonconvex}
Equipped with boundary oracle technique (which provides both a boundary point of $G$ in a given direction as well as the normal vector $c$ at that point) we are able to discover vectors $c\in C_-$ and consequently verify if they also belong to  $C_{\rm ncvx}$.
In our approach we sample random directions $d$, obtain corresponding $c$ using \eqref{dual_BO} and check if it satisfies the conditions of the Proposition \ref{th:noconv_cert}.
This process continues unless such $c\in C_-$ is found or the number of attempts exceed some limit.
This algorithm is implemented in the accompanying library as \underline{\tt get\_c\_minus.m}.

To establish non-convexity of $F$ it is sufficient to show that $C_{\rm ncvx}$ is not empty by providing at least one non-zero $c\in C_{\rm ncvx}$.
When $b_k$ is non-trivial this can be done by using the algorithm to find $c\in C_{\rm ncvx}$ outlined above.
When $f$ is homogeneous we use a similar algorithm which identifies boundary non-convexities with $\dim(\Ker(c\cdot A))=2$, see Appendix \ref{non_convex_homogeneous}.
This algorithm is implemented in the accompanying library as \underline{\tt nonconvexity\_certificate.m}.

\begin{proposition}[Efficiency of non-convexity certificate]\label{th:prob1}
Let $d\in {\mathbb R}^m$, $|d|=1$ be a uniformly distributed on the unit sphere random variable and
$$ \varphi(d) = \left\{\begin{array}{l}
                      1, \ \ \text{ if the solution } $c$ \text{ of the problem (\ref{dual_BO}) satisfies the conditions of the Prop. \ref{th:noconv_cert}}\\
                      0, \ \ \text{ otherwise}
                    \end{array}
 \right.$$
Then for a generic map $f$ if the image $F$ is non-convex the expectation $\mathbb{E}(\varphi)>0$.
\end{proposition}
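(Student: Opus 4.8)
The plan is to reduce the statement to a measure-theoretic claim about $\partial G$: I will show that the subset of the boundary of $G$ at which the dual boundary oracle \eqref{dual_BO} returns a normal $c$ obeying the hypotheses of Proposition~\ref{th:noconv_cert} carries positive $(m-1)$-dimensional measure, so that a uniformly random ray emanating from an interior point $y$ meets it with positive probability, forcing $\mathbb{E}(\varphi)>0$. First I would use the genericity assumption together with the discussion preceding Definition~\ref{ex:c_minus}: when $F$ is non-convex $C_{\rm ncvx}$ is non-empty, and for a generic map one may select $c^\star\in C_{\rm ncvx}$ meeting every hypothesis of Proposition~\ref{th:noconv_cert} — that is $\dim\Ker(c^\star\cdot A)=1$, solvability of $(c^\star\cdot A)x_b=-c^\star\cdot b$, and non-collinearity of the associated $u,v$. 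For such $c^\star$ the set $\partial F_{c^\star}$ of \eqref{boundary} is the non-convex parabola (real case) or parabolic surface (complex case) contained in the supporting hyperplane $\{y\,|\,c^\star\cdot y=c^\star\cdot y_0\}$, so its convex hull $\partial G_{c^\star}=\conv(\partial F_{c^\star})$ is a flat face of $G$ with non-empty relative interior, of real dimension $2$ in the real case and $3$ in the complex case.

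The crucial step is a dimension count showing that, although a single such face is low-dimensional for large $m$, the union $\mathcal N=\bigcup\partial G_{c}$ over all admissible $c$ is codimension one in $\R^m$. The constraints $c\cdot A\succeq 0$ and $\det(c\cdot A)=0$ cut out $\partial\mathcal K$, of codimension $1$; the orthogonality condition $x_0^*(c\cdot b)=0$ of \eqref{cminorig} imposes one further real equation in the real case and two in the complex case. Thus the admissible normals form a cone of codimension $2$ (real) or $3$ (complex), i.e.\ an $(m-3)$- or $(m-4)$-parameter family of rays. Attaching to each ray its $2$- or $3$-dimensional face yields $\dim\mathcal N=(m-3)+2=m-1$ in the real case and $(m-4)+3=m-1$ in the complex case. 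Hence $\mathcal N$ is a codimension-one piece of $\partial G$ and carries positive $(m-1)$-dimensional measure.

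It remains to transfer this to directions and to the oracle output. Fixing any $y\in\mathrm{int}\,G$, the radial projection $z\mapsto(z-y)/|z-y|$ maps $\partial G$ onto the unit sphere and, being a local diffeomorphism on the regular part of the boundary, sends the positive-measure set $\mathcal N$ to a positive-measure set of directions $d$. For every $d$ whose boundary point $y+t\,d$ lies in the relative interior of a face $\partial G_{c}$ the supporting hyperplane is unique, so the dual program \eqref{dual_BO} returns precisely this admissible $c$, giving $\varphi(d)=1$. Since the complementary directions — those meeting the relative boundaries of the faces, or the smooth part of $\partial G$ where $c\cdot A\succ 0$ — contribute $\varphi=0$ but do not affect positivity, one concludes $\mathbb E(\varphi)>0$.

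The main obstacle I anticipate is the positive-measure claim of the second paragraph: one must verify that for a generic map the admissible locus of normals genuinely has the stated dimension (the orthogonality condition being transverse to $\partial\mathcal K$, and the faces non-degenerate), and that radial projection from $y$ does not collapse the measure of $\mathcal N$ — a transversality/Sard-type statement guaranteeing that $\mathcal N$ is not tangent to the radial directions along a full-measure subset. The remaining points, namely uniqueness of the dual normal on the relative interiors of faces and the negligibility of their relative boundaries, are comparatively routine.
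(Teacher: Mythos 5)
Your proposal is correct in outline and reaches the right conclusion, but at the decisive step it takes a genuinely different route from the paper. The paper's proof is local: it fixes one direction $d_0$ whose exit point lies in $\partial G$ but not in $F$, invokes typicality (within $C_-$) of vectors $c$ satisfying Proposition~\ref{th:noconv_cert}, and then asserts that every $d$ in a small but finite vicinity of $d_0$ returns the \emph{same} vector $c$, which immediately gives $\mathbb{E}(\varphi)>0$. That assertion is literally valid only when the flat face $\partial G_c$ has codimension one in $\R^m$ (real $m=3$, complex $m=4$); for larger $m$ a fixed face of dimension $2$ (resp.\ $3$) subtends a measure-zero set of directions, and nearby $d$ return nearby but distinct normals. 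You instead aggregate over the whole $(m-3)$- (resp.\ $(m-4)$-) dimensional family of admissible normals, argue that the union $\mathcal{N}=\bigcup_c \partial G_c$ of the corresponding faces has codimension one, and radially project; this is exactly what repairs the paper's vicinity claim in arbitrary dimension, at the price of the transversality facts you flag as your main obstacle. Two remarks on closing those gaps. First, the dimension count can be bypassed: if $F$ is closed, non-convex, and $C_{\rm ncvx}\neq\emptyset$, then $\partial G\setminus F$ is non-empty (it contains $\conv(\partial F_{c^\star})\setminus\partial F_{c^\star}$) and relatively open in $\partial G$, and any supporting normal $c$ at a point of $\partial G\setminus F$ has $\partial G_c\neq\partial F_c$, hence $c\in C_{\rm ncvx}\subseteq C_-$; so $\mathcal{N}$ contains a non-empty relatively open subset of $\partial G$ and has positive measure with no Sard-type argument. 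Second, uniqueness of the supporting hyperplane at relative-interior points of a low-dimensional face is not automatic and also not needed: a convex body has a unique normal at almost every boundary point, and genericity (the same typicality assumption the paper makes) is what ensures that whatever normal \eqref{dual_BO} returns at a point of $\partial G\setminus F$ satisfies the full set of conditions of Proposition~\ref{th:noconv_cert} --- including $\dim(\Ker(c\cdot A))=1$ and non-collinearity of $u$ and $v$ --- rather than merely certifying $c\in C_{\rm ncvx}$.
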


The idea of the proof is two-fold. First, we notice that for definite maps vectors $c\in C_-$ which satisfy the conditions of the Proposition \ref{th:noconv_cert} are typical in $C_-$.
Provided $F$ is non-convex, for any $y$ there is a direction $d_0$ such that $y+td_0\in G$ is not in $F$.
Moreover, because of typicality argument, vector $c$ associated with $y+td_0$ would be the one recognized by our approach as non-convex, $\varphi(d)=1$.
Second, and crucial point, any vector $d$ from a sufficiently small but finite vicinity of $d_0$ would result in the same vector $c$, as illustrated in Fig.~\ref{fig:nonconv_idea}.
Hence there is a finite probability $\mathbb{E}(\varphi)>0$ that a random vector $d$ would fall into a small but finite vicinity of $d_0$.

This proposition establishes efficiency of our stochastic non-convexity certificate.
As the number of random iterations is taken to infinity, inability of the algorithm to find $c\in C_-$ means almost surely in the probabilistic sense that the image $F$ is convex.

\section{Identifying convex subpart of $F$}
\label{sec:convexsubpart}
If a boundary non-convexity $c\in C_{\rm ncvx}$ is found, the image $F$ is non-convex.
In this case it would be desirable to identify a convex subset of $F$ which would be maximally large in size and simple to deal with.
The approach of \cite{Dymarsky_cuttingArX} is to find a particular hyperplane which would split $F$ into two parts such that the compact part is convex.
More concretely, for some $c_+ \in {\mathcal K}_+$ such that $c_+\cdot A\succ 0$, we would like to find maximal $z=z_{\rm max}$ such that the set
\begin{align}
\label{Fz}
F_z=\{y\in {\mathbb R}^m |\, y\in F,\ c\cdot y\le z\}\subset F
\end{align}
is convex.
The following proposition explains how to calculate $z_{\rm max}$.
\begin{proposition}[Convex cut]
	\label{lemma:convex_cut}
Let $c_+\in {\mathcal K}_+$ such that $A_+\equiv c_+\cdot A\succ 0$, and $x^0=-A_+^{-1}b_+$, where $b_+=c_+\cdot b$.
Then $F_z$ \eqref{Fz} with $z=z_{\rm max}$ given by
\begin{align}
\label{zdef}
z_{\rm max}=\min_{c\in C_-} \|(c\cdot A)^{-1}(c\cdot b)-x^0\|^2_+
\end{align}
is convex \cite{Dymarsky_cuttingArX}.
Here $\|x\|_+^2$ is defined as $\|x\|_+^2\equiv x^* A_+ x$.
Here and below $(c\cdot A)^{-1}$ stands for a pseudo-inverse of $(c\cdot A)$ when the latter is singular.
\end{proposition}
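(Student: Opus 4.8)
The plan is to reduce the claim to the convexity of the image of a ball, to locate the first cut height at which a boundary non-convexity can form, and to match that height with $z_{\max}$.

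\textbf{Reduction to a ball image.} Since $A_+=c_+\cdot A\succ 0$, the cut functional $c_+\cdot f$ is a strictly convex quadratic in $x$ with unique minimizer $x^0=-A_+^{-1}b_+$. Completing the square gives $c_+\cdot f(x)=\|x-x^0\|_+^2-\|x^0\|_+^2$, so the cut $\{c_+\cdot y\le z\}$ pulls back to the $\|\cdot\|_+$-ball $B_z=\{x\in\mathbb{V}\,|\,\|x-x^0\|_+^2\le z+\|x^0\|_+^2\}$ centered at $x^0$, and $F_z=f(B_z)$. Proving the proposition is therefore equivalent to showing that $f(B_z)$ is convex for $z\le z_{\max}$. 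It is convenient (though not essential) to first apply the linear change of variables $w=A_+^{1/2}(x-x^0)$, after which $A_+=I$, $x^0=0$, $\|\cdot\|_+=|\cdot|$, $B_z$ is a round ball, and $c_+\cdot f=|w|^2+\mathrm{const}$.

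\textbf{Where a non-convexity can form.} Following the mechanism behind Proposition \ref{th:noconv_cert}, a compact image acquires a non-convexity on its boundary exactly through a supporting direction whose touching set degenerates into a non-degenerate parabola. For the ball $B_z$ I analyze the supporting directions through the first-order (trust-region) conditions for $\min_{x\in B_z} c\cdot y$: the stationarity relation $\big((c\cdot A)+\lambda A_+\big)x=-\,c\cdot b+\lambda A_+x^0$ with multiplier $\lambda\ge 0$ shows that the effective normal is $c'=c+\lambda c_+$, for which $c'\cdot A=(c\cdot A)+\lambda A_+\succeq 0$. Hence every boundary non-convexity of $f(B_z)$ — whether on the curved side or on the flat cut face — is governed by an effective direction $c'$ with $c'\cdot A$ singular and positive semidefinite, i.e.\ a vector in $C_-$, together with its minimizing affine set $S_{c'}=\{x\,|\,(c'\cdot A)x=-c'\cdot b\}$. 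Such a direction contributes a genuine multi-point parabola to the boundary precisely when $S_{c'}$ meets the interior of $B_z$.

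\textbf{The onset level is $z_{\max}$.} For $c\in C_-$ the orthogonality condition in \eqref{cminorig} guarantees $S_c\ne\varnothing$, and $(c\cdot A)^{-1}(c\cdot b)$ (pseudo-inverse) is the representative of $S_c$ closest to the origin. The ball $B_z$ first reaches $S_c$ when its squared radius equals the squared $\|\cdot\|_+$-distance from $x^0$ to $S_c$; a short computation, transparent in the normalization $A_+=I$, $x^0=0$ where $\|x^0\|_+^2=0$, identifies this onset with $\|(c\cdot A)^{-1}(c\cdot b)-x^0\|_+^2$. Minimizing over $c\in C_-$ then yields exactly $z_{\max}=\min_{c\in C_-}\|(c\cdot A)^{-1}(c\cdot b)-x^0\|_+^2$. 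For $z<z_{\max}$ no $S_c$ with $c\in C_-$ meets $B_z$, so no multi-point parabola arises; at $z=z_{\max}$ the nearest $S_c$ is merely touched, contributing a single point. Since $C_-\supseteq C_{\rm ncvx}$, using $C_-$ only shrinks $z_{\max}$ and makes the cut more conservative, so no genuine non-convexity of $F$ is overlooked.

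\textbf{Closing the argument, and the main obstacle.} To turn \enquote{no boundary non-convexity} into \enquote{convex}, I will run a continuation argument in $z$. For $z$ just above $\min_x c_+\cdot f$ the set $f(B_z)$ is a small neighborhood of $f(x^0)$ which is convex whenever $Df(x^0)$ is surjective (it is then an affine image of a small ball up to controlled quadratic corrections), and convexity of the growing family of compact sets $f(B_z)$ can only be lost through the formation of a boundary non-convexity, which by the previous step cannot occur before $z_{\max}$. The main obstacle is precisely this implication: converting the local condition \enquote{absence of boundary non-convexity} into the global statement \enquote{$F_{z_{\max}}$ is convex} is the nontrivial step and requires the topological/continuation input of \cite{Gutkin,Sheriff,Dymarsky_cuttingArX}, together with a regularity hypothesis such as $n\ge m$ (surjectivity of $Df$) to secure the base case and to rule out lower-dimensional degeneracies. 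Secondary technical points are the treatment of the flat cut face through the effective-direction reduction above, the degenerate cases $\dim\Ker(c\cdot A)>1$ and the trust-region hard case, and verifying that the pseudo-inverse representative is indeed the $\|\cdot\|_+$-closest point of $S_c$.
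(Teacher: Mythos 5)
Your proposal is correct and follows essentially the same route as the paper: both arguments establish necessity by projecting each potential flat-edge non-convexity (a direction $c\in C_-$) onto $c_+$ to obtain the onset height $z(c)=\min_{y\in\partial F_c}(c_+\cdot y)$, take the minimum over $C_-$, and then defer the hard sufficiency step — that absence of boundary non-convexities below the cut implies convexity of $F_{z_{\rm max}}$ — to the cited result of \cite{Dymarsky_cuttingArX}. The paper's own proof is exactly this geometric sketch plus citation; your ball-image reduction and trust-region analysis of supporting directions merely make the same logic more explicit, and the obstacle you flag is precisely the step the paper also outsources.
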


The geometrical logic behind \eqref{zdef} is as follows.
Each $c\in C_-$ defines a potentially non-convex boundary region \eqref{boundary}, which is called "flat edge" in \cite{Dymarsky_cuttingArX}.
We consider the projection of this region on $c_+$ and immediately find that for $c\in C_-$,
\begin{align}
z(c)=\min_{y\in \partial F_c} (c_+\cdot y)=\|(c\cdot A)^{-1}(c\cdot b)-x^0\|^2_+\ .
\end{align}
This simply means that the "flat edge" (potentially non-convex boundary) $\partial F_c$ does not stretch "beyond" the hyperplane $c_+\cdot y=z(c)$, i.e.~all points $y\in \partial F_c$ satisfy $c_+\cdot y\ge z(c)$.
The value of \eqref{zdef} defined as $z_{\rm max}=\min_{c\in C_-} z(c)$ guarantees that no boundary non-convexity stretches beyond $c_+\cdot y=z_{\rm max}$.
This is clearly a necessary condition for $F_{z_{\rm max}}$ to be convex.
Moreover, it is also sufficient \cite{Dymarsky_cuttingArX}.

Below we formulate the algorithm to find $z_{\rm max}$ numerically by calculating
\begin{equation}
\label{eq:z_max} z_{\max} = \min\limits_{c\in C_{-}} z(c)\ ,
\end{equation}
using gradient descent along $C_-$.
To simplify the following presentation we perform a linear change of variables $x\rightarrow x-x^0$, accompanied by $b_i\rightarrow b_i+A_i x^0$ and the shift $y_i \rightarrow y_i-(x^0)^* A_i x^0 -(x^0)^* b_i-b_i^* x^0$.
In the new coordinates quadratic map still has the conventional form \eqref{real} or \eqref{complex}.
Next we perform a linear transformation $x\rightarrow \Lambda x$ where $A_+=\Lambda^* \Lambda$.
In the new coordinates $A_+={\mathbb I}$ is the identity matrix and $\|x\|^2=x^*x$ is the regular Euclidean norm.
New $b_i$ also satisfies $c_+\cdot b=0$. In the new coordinates we introduce
\begin{eqnarray}
\label{v}
v(c)&=&(c\cdot A)^{-1} (c\cdot b),\,\mbox{and then} \\
z(c)&=&v^*v\ . \label{zv}
\end{eqnarray}
Notice that even though $c\cdot A$ is singular for $c\in C_-$, $v(c)$  satisfies $(c\cdot A)v(c)=c\cdot b$.

\subsection{Geometry of $C_-$}
\label{section:geometry}
To implement gradient descent along $C_-$ we would like first to understand its dimensionality.
First we notice that $C_-\subset \partial {\mathcal K}$.
The boundary $\partial {\mathcal K}$ can be parametrized by all vectors $c$ such that $c\cdot c_+=0$.
Indeed for any vector $c$, vector $p(c)$
\begin{equation}
p_i\equiv c_i-(c_+)_i \lambda_{\rm min}(c\cdot A)
\end{equation}
belongs to $\partial {\mathcal K}$ as the associated matrix $p\cdot A\succeq 0$ and singular.
Here $\lambda_{\rm min}(c\cdot A)$ stands for the smallest eigenvalue of $c\cdot A$.
Because of $c_+\cdot b=0$ we also have $c\cdot b=p\cdot b$.
Furthermore, function $z(c)$ is invariant under rescaling of $c$: $z(c)=z(\mu c)$ for any $\mu>0$.
Hence for the purpose of finding $z_{\rm max}$ numerically we can redefine $C_-$ as follows
\begin{equation}
C_-=\{ c\in {\mathbb R}^m|\, c\cdot c_+=0,\ |c|^2=1,\ \dim\Ker(p\cdot A)= 1,\ \forall\, x_0\in \Ker(p\cdot A),\ x_0^*(c\cdot b)=0 \}\subset {\mathbb S}^{m-2}\ .
\end{equation}
As in the case of section \ref{boundary_nonconvexity} and in the same sense $C_-$ is approximately equal to $C_{\rm ncvx}$. Although $C_-$ includes vectors $c$ for which $\dim\Ker(p\cdot A)>1$, these vectors have measure zero inside $C_-$, hence this condition does not reduce dimensionality of $C_-$.
The important condition is $x_0^*(c\cdot b)=0$, which imposes a real-valued or complex-valued constraint, reducing the dimension of $C_-$ by one or by two correspondingly.
Hence we conclude that $C_-$ is an $(m-3)$-dimensional subset in ${\mathbb S}^{m-2}$ in the real case, and $(m-4)$-dimensional subset in ${\mathbb S}^{m-2}$ in the complex case.

When $m=4$ and $\mathbb{V}=\mathbb{C}$, the set $C_-$ consists of discrete points inside ${\mathbb S}^{2}$.
In that case all $c_-\in C_-$ can be found analytically or numerically using \underline{\tt get\_c\_minus.m}, and $z_{\rm max}$ can be calculated explicitly.
An example of such an analytic calculation -- for the solvability set of Power Flow equation for a 3-bus system -- can be found in \cite{DymarskyTuritsyn}.
Similar logic applies for real quadratic maps with $m=3$.
An example when all $c_-\in C_-$ are calculated both analytically and numerically is presented in the Section \ref{num_res}.

\subsection{Continuous case}
\label{contcase}
In the general case, when $m> 3$ and $m>4$ for the real and complex maps correspondingly, the set $C_-$ will be a continuous subset within ${\mathbb S}^{m-2}$ of codimension one or two.
A priori it may consists of several disjoint patches and have self-intersections.
We will assume that $C_-$ is smooth modulo special points of measure zero. 
Once a point $c_-\in C_-$ is identified, we would like to perform a gradient descent along $C_-$ to minimize $z(c)$.
This process should repeat for all patches of $C_-$.
In practice we will repeat \underline{\tt get\_c\_minus.m} and for each found $c_-\in C_-$ perform a gradient descent, keeping the smallest value of $z(c)$ among all iterations.

Let us now assume that $c(t):{\mathbb R}\rightarrow C_-$ is a smooth trajectory of gradient descent inside $C_-$ (here we hypothetically take the step of gradient descent to be infinitesimally small).
Then for each $t$ it must satisfy $|c|^2=1$, $c\cdot c_+=0$ and $x_0^*(c\cdot b)=0$.
By differentiating these conditions with respect to $t$ we find the following set of linear constraints on $\dot{c}$ (see Appendix \ref{sec:gradient} for derivation):
\begin{eqnarray}
\label{const}
\dot{c}\cdot c(t)=0\ ,\quad \dot{c}\cdot c_+=0\ ,\quad \dot{c}\cdot n(c)=0\ ,\\
n_i=x_0^*q_i\ ,\quad q_i=b_i -(A_i-(x_0^* A_i x_0){\mathbb I})Q(c)^{-1}(c\cdot b)\ ,\quad Q(c)=p(c)\cdot A\ .
\label{q}
\end{eqnarray}
Here $x_0$ is a normalized vector $|x_0|^2=1$, $x_0\in \Ker (p\cdot A)$.
If $m=4$ in the real case or $m=5$ in the complex case constraints \eqref{const} uniquely specify the direction of possible gradient descent $\dot{c}$ up to an overall sign.
But when $m$ is larger the direction of the gradient descent follows from \eqref{zv} (see Appendix \ref{sec:gradient}):
\begin{equation}
	\label{eq:gradient}
	({\nabla} z)_i=\frac{\partial z}{\partial c_i}=2\Re(v^*Q^{-1}q_i)\ .
\end{equation}
This expression automatically satisfies
\begin{equation}
\nabla z\cdot c(t)=0\ ,\quad \nabla z\cdot c_+=0\ ,
\end{equation}
but $\nabla z\cdot n\neq 0$.
To impose $\dot{c}\cdot n=0$, we introduce a projector $P(\nabla z)$. In the real case it has the form
\begin{equation}
P[{\nabla} z]=\vec{\nabla} z -\vec{n} (n\cdot {\nabla} z)/|n|^2\ .
\end{equation}
In the complex case there are two vectors $n_1=\Re(n)$ and $n_2=\Im(n)$ and therefore
\begin{align}
P[{\nabla} z]=\vec{\nabla} z -\vec{n}_1 a - \vec{n}_2 b\ ,\qquad \qquad\qquad \qquad\qquad \\
a={(n_1\cdot {\nabla} z)|n_2|^2-(n_2\cdot {\nabla} z)(n_1\cdot n_2)\over |n_1|^2 |n_2|^2-(n_1\cdot n_2)^2},\
b={(n_2\cdot {\nabla} z)|n_1|^2-(n_1\cdot {\nabla} z)(n_1\cdot n_2)\over |n_1|^2 |n_2|^2-(n_1\cdot n_2)^2}\ .
\end{align}

\begin{figure}[h]
\centerline{\includegraphics[width=0.5\textwidth]{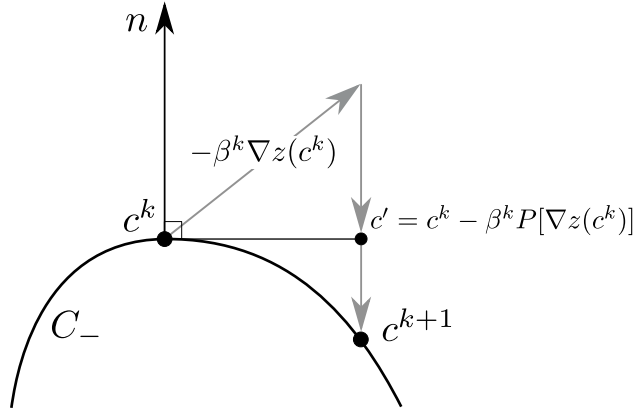}}
\caption{The gradient projection method}
\label{fig:cminus_grad}
\end{figure}

Applying the projector ensures that $\dot{c}$ changes along $C_-$ provided the step of the gradient descent is infinitesimally small.
In the numerical implementation this is clearly not the case.
Hence the full algorithm will consist of iteratively applying two steps: the step of gradient descent along the tangential direction to $C_-$ and then an additional projection $\pi_{C_-}$ onto $C_-$.
The initial value $c^{(1)}$ is provided by the call of \underline{\tt get\_c\_minus.m}.
Assuming at step $k\ge 1$ vector $c=c^{(k)}$, the iteration is as follows (see Figure \ref{fig:cminus_grad})
\begin{equation}
c^{(k+1)}=\underbrace{\pi_{C_-}}_{\mathrm{projector}}(\underbrace{c^{(k)}-\beta^k P[\nabla z(c^{(k)})]}_{\mathrm{gradient \,\,step}})\ .
\end{equation}
Here $\beta^k$ is the length of the gradient descent step at iteration $k$ and the project $\pi_{C_-}$ has to be defined separately for real and complex cases.



\noindent {\bf Projector in $\mathbb{V}=\mathbb{R}$ case}.
After calculating $c'=c^{(k)}-\beta^k P[\nabla z]$ this vector would automatically satisfy $c'\cdot c_+=0$ but since $\beta^k$ is finite, it does not necessarily belongs to $C_-$.
To project the result onto $C_-$ we will consider a family $\tilde{c}(\lambda)=c'+\lambda \vec{n}(c^{(k)})$ and will find $\lambda$ such that $\tilde{c}(\lambda)\in C_-$.
To that end we define function $m$ as the "distance" to $C_-$ in terms of the following dot product,
\begin{eqnarray}
m(\lambda)=x_0^*\big(\tilde{c}(\lambda)\big)\big(\tilde{c}(\lambda)\cdot b\big),
\end{eqnarray}
where $x_0(\tilde{c}) \in \Ker \big(p(\tilde{c})\cdot A\big)$, $|x_0|^2=1$, and the overall sign of $x_0$ is chosen such that the dot product $x_0^*\big(\tilde{c}(\lambda)\big) x_0(c')\ge 0$. The latter condition is necessary to make the function $m$ continuous.
Then we try to find $\lambda$ such that $m(\lambda)=0$ which is equivalent to $\tilde{c}(\lambda)/|\tilde{c}|\in C_-$.
Function $m(\lambda)$ is continuous in the vicinity of $\lambda=0$ provided $\Rg Q(\lambda=0)=n-1$.
We find the root of $m(\lambda)$ numerically using the bisection method on the interval $\lambda\in [-\lambda_0,\lambda_0]$ with $\lambda_0=\|c-c'\|$ as a heuristic estimate for the maximal possible value of $\lambda$.

If for some $\lambda$, $m(\lambda)=0$, the projection step was a success, and the new point $c^{(k+1)}=\tilde{c}(\lambda)/|\tilde{c}(\lambda)|\in C_-$.
If function $m(\lambda)$ does not change sign on the interval $[-\lambda_0,\lambda_0]$, or at some $\lambda$, $\Rg Q(\tilde{c}(\lambda))\neq n-1$, we reduce the gradient step $\beta^k$, recalculate $c'$ and try the projection again.

The gradient steps continue until the gradient $\nabla z$ becomes collinear with $n$, which signals that a local minimum of $z(c)$ is reached, or $\dim\Ker(Q(c))>1$ which means the gradient descent trajectory reached a boundary point of $C_-$.

\noindent{\bf Projector in $\mathbb{V}=\mathbb{C}$ case}.
First step is the same: we define $c'=c^{(k)}-\beta^k P[\nabla z]$. Since in the complex case there are two normal vectors, $\vec{n}_1$ and $\vec{n}_2$, the projection procedure is different.
We define a "distance" to $C_-$ in terms of the $c$-dependent "norm-square" function
\begin{equation}
\rho(c)=w^* w\ ,\quad w(c)=x_0(c)^*(c\cdot b)\ .
\end{equation}
Obviously $\rho$ is positive semi-definite and $\rho(c)=0$ if and only if $c\in C_-$.
It is also a continuous function of $c$ provided $\dim\Ker(Q(c))=1$. To find $\tilde{c}$ such that $\rho(\tilde{c})=0$ we apply gradient descent starting from $c'$
and using (see Appendix \ref{sec:gradient} for derivation)
\begin{eqnarray}
\frac{\partial \rho(c)}{\partial c_i}=2\Re(w^*(x_0^*q_i))\ .
\end{eqnarray}

We note that the projector in the complex case can be also used in the real case. However, the binary search is substantially faster than the gradient descent, resulting in a speedup for real maps.


\section{Examples}
\label{num_res}
In this section we test the proposed algorithms on a range of several multidimensional maps.
Some of the maps are artificially or randomly generated, while others describe Power Flow equations for certain energy networks. Our main focus is to identify convex subpart of $F$ as described in section \ref{sec:convexsubpart}. This will automatically include 
certifying (non)-convexity using the algorithm of section \ref{sec:nonconvex} and
finding boundary non-convexities using boundary oracle of section \ref{sec:boundaryoracle}. Each test below consists of two parts: numerical (applying {\tt get\_z\_max}) and analytical (if an analytic analysis is possible). 

All examples discussed in this section are implemented as test cases in the library CAQM. Running corresponding {\tt .m} files will generate the data presented in this section (although the algorithms are stochastic in nature the random seed remains the same hence re-running the program will lead to the identical results).


\paragraph{Example 1. Artificial ${\mathbb R}^3\rightarrow {\mathbb R}^3$ map.}
{\em See file \underline{\tt examples/article\_example01.m}.}

\noindent We start with a $\mathbb{R}^3 \rightarrow \mathbb{R}^3$ quadratic mapping specified by
\begin{align*}\arraycolsep=1.4pt\def\arraystretch{1}
A_1 & = \left(\begin{array}{ccc} 1 & 1 & 1 \\ 1 & 2 & 0 \\ 1 & 0 & 2 \\ \end{array}\right), &
A_2 & = \left(\begin{array}{ccc} 3 & -1 & 0 \\ -1 & 0 & -1 \\ 0 & -1 & 1 \\ \end{array}\right), &
A_3 & = {\mathbb I}\ , \\
b_1 & = \left(\begin{array}{ccc} 1 & 1 & 1 \\ \end{array}\right)^T, &
b_2 & = \left(\begin{array}{ccc} 1 & 0 & -1 \\ \end{array}\right)^T, &
b_3 & = \left(\begin{array}{ccc} 0 & 0 & 0 \\ \end{array} \right)^T.
\end{align*}
It is clear that $A_3$ is positive-definite, hence this map is definite.
We choose $c_+=(0,0,1)^T$. Next, we analytically look for vectors $c\in C_-$ defined in \eqref{cminorig}.
We appropriately parametrize vector $c\in C_-$ first and solve an algebraic constraint $x_0^*(c\cdot b)=0$. The resulting $x_0$ is used to find $c$ from the relation $(c\cdot A)x_0=0$. This is done in the accompanying Mathematica notebook {\tt article\_example01.nb}. As a last step solutions for which $(c\cdot A)$ is not semi-definite should be tossed out.  Eventually, 
one finds three distinct vectors  $c\in C_-$ (up to an overall rescaling),
\begin{equation}
\label{threecm}
c_1^T=\left(1, 0, 0 \right),\quad c_2^T = \left(2, 1, -2 \right),\quad c_3^T = \left(0.7227, 3.4347, 1 \right).
\end{equation}
Corresponding values of $z(c)$ \eqref{zdef} are as follows
\begin{equation}
z(c_1)={1\over 3}\ ,\quad z(c_2)={74\over 75}\ ,\quad z(c_3)=0.3656\ .
\end{equation}
Three different vectors \eqref{threecm} means there are three boundary non-convexities as illustrated in Fig. \ref{fig:art_2D}.
There we plot two different 2D sections of the image $F$ corresponding to $y_3=1/3$ and $y_3=4$.
In the first case $y_3=1/3$ and the section is convex, but not strongly convex at the point  highlighted in the Figure. $y_3=1/3$ is the critical value at which the boundary non-convexity associated with $c_1$ develops.
In the second case $y_3=4$ all three boundary non-convexities are clearly visible, together with the corresponding points of "flat edge" $\partial F_c$.

Running the algorithm numerically identifies all three boundary non-convexities and yields the correct value $z_{\rm max}=1/3$.

\begin{figure}[h]
\includegraphics[width=8cm]{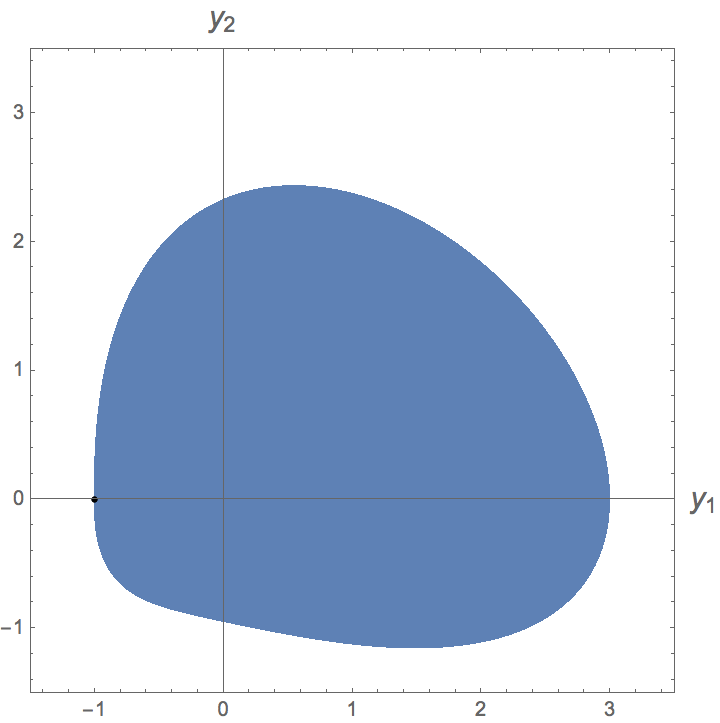}
\includegraphics[width=8cm]{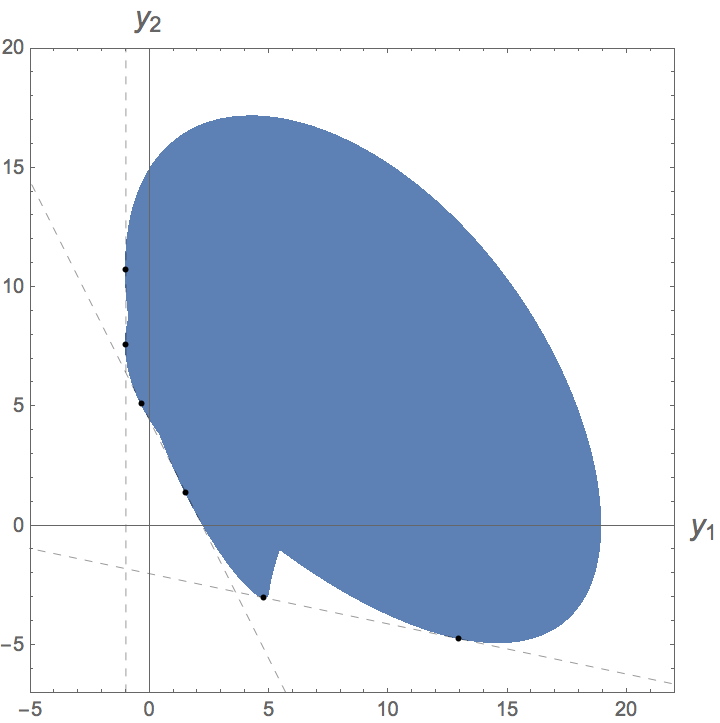}
\caption{Two sections of the feasibility domain: first we fix $y_3 = 1/3$ and obtain convex section, then for $y_3 = 4$ the section is non-convex.}
\label{fig:art_2D}
\end{figure}

\paragraph{Example 2. Power Flow system of \cite{Ortega15}.}
{\em See file \underline{\tt examples/article\_example02.m}.}\\
\noindent This example of quadratic map is from the article \cite{Ortega15}. It describes a 3-bus Power System with constant power loads. In mathematical terms the problem considred there is the feasibility problem of section \ref{certif} for the $\mathbb{R}^3 \rightarrow \mathbb{R}^3$ quadratic map
\begin{eqnarray}
\label{ex2}
P_1(x) & =& x_1^2 - 0.5 x_1 x_2 + x_1 x_3 - 1.5 x_1 \\ \nonumber
P_2(x) & =& x_2^2 - 0.5 x_1 x_2 - x_2 x_3 + 0.5 x_2 \\ \nonumber
P_3(x) & =& x_3^2 - 2 \epsilon x_3( x_1 + x_2) - x_3, \quad \epsilon = 0.01.
\end{eqnarray}
Here we investigate convexity of the map \eqref{ex2}.
For $c_+ = \left( 2,2,1 \right)^T/3$ using the approach discussed in the case of Example 1, we analytically obtain a unique $c = (0.3169, 0.9196, 0.2322)^T\in C_-$ associated with a boundary non-convexity. Further details can be found in Mathematica notebook {\tt article\_example02.nb}.
Running {\tt get\_z\_max} identifies this unique boundary non-convexity and finds $z_{\rm max}=0.0283$.

\paragraph{Example 3. AC Power Flow system of \cite{DymarskyTuritsyn}.}
{\em See file \underline{\tt examples/article\_example03.m}.}\\
\noindent We consider a tree unbalanced 3-bus AC Power Flow system (1 slack, 2 PQ-buses) described by the admittance matrix
$$Y = \left(\arraycolsep=1.4pt\def\arraystretch{1}\begin{array}{ccc} -1-i & 1+i & 0 \\ 1+i & -2-i & 1 \\ 0 & 1 & -1 \\ \end{array} \right).$$ 
The feasibility region in the space of $y = \left( P_2, Q_2, P_3, Q_3 \right)^T$, where $P_i$ and $Q_i$ denote active and reactive power on the $i$-th bus, is an image $F$ of a $\Cc^2\rightarrow \R^4$ quadratic map associated with the corresponding Power Flow equations. 
A complete analytic analysis of the feasibility region was performed in \cite{DymarskyTuritsyn} (details can be also found in the Mathematica notebook {\tt article\_example03.nb}), where it was shown that $F$ is non-convex with a unique vector $c=(0,0,-1,-1)^T/\sqrt{2}\in C_-$ and $z_{\rm max}=1/\sqrt{2}$ for $c_+=(1,1,0,0)^T/\sqrt{2}$. 
Running this example numerically yields the same result.

\paragraph{Example 4. AC Power Flow system of \cite{Suetin15}.}
{\em See file \underline{\tt examples/article\_example04.m}.}\\
This is an example of 3-bus AC Power Flow network with a slack, PV and PQ-buses from \cite{Suetin15}, see Fig.~\ref{fig:3bus}.
Besides a more involved structure of the power network in comparison with the Example 3, another important difference is that the entries of the corresponding admittance matrix are not integers.
Hence the corresponding quadratic map is free of accidental degeneracies.

\begin{figure}[htb]
\centering \includegraphics[width=9cm]{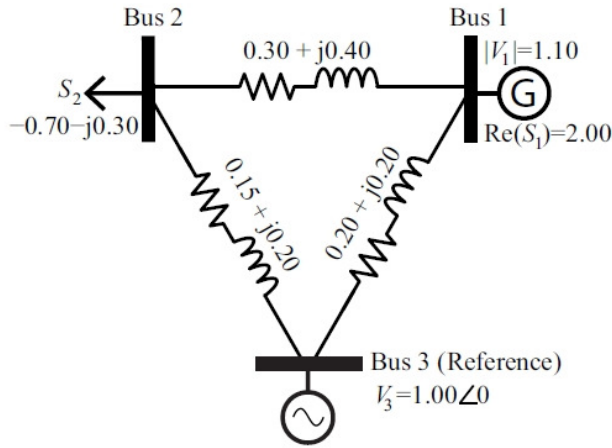}
\caption{Three-bus system}
\label{fig:3bus}
\end{figure}
The Power flow equations are as follows, 
{\tiny \begin{align*}
\arraycolsep=1.4pt\def\arraystretch{1}
P_1 & = x^T \left(
              \begin{array}{cccc}
                3.7 & -0.6 & 0 & -0.8 \\
                -0.6 & 0 & 0.8 & 0 \\
                0 & 0.8 & 3.7 & -0.6 \\
                -0.8 & 0 & -0.6 & 0 \\
              \end{array}
            \right) x
             + 2 \left(\begin{array}{c} -1.25\\0\\1.25\\0\end{array}\right) x,\\
U_1 & = x_1^2 + x_3^2, \\
P_2 & =x^T \left(
              \begin{array}{cccc}
              0 & -0.6 & 0 & 0.8 \\
                -0.6 & 3.6 & -0.8 & 0 \\
                0 & -0.8 & 0 & -0.6 \\
                0.8 & 0 & -0.6 & 3.6 \\
              \end{array}
            \right) x
             + 2 \left(\begin{array}{c} 0\\-1.2\\0\\1.6\end{array}\right) x,\\
Q_2 & =x^T \left(
              \begin{array}{cccc}
            0 & -0.8 & 0 & -0.6 \\
                -0.8 & 4.8 & 0.6 & 0 \\
                0 & 0.6 & 0 & -0.8 \\
                -0.6 & 0 & -0.8 & 4.8 \\
              \end{array}
            \right) x + 2 \left(\begin{array}{c} 0\\-1.6\\0\\-1.2\end{array}\right) x.
\end{align*}
}
We define $x = (\text{Re} V_1,~ \text{Re} V_2,~ \text{Im} V_1,~ \text{Im} V_2)^T$ and $V_3 = 1$ for the slack bus.
Converting notations to the conventional form (\ref{complex}) one finds:
\begin{align*}
A'_1 & = \left(\arraycolsep=1.4pt\def\arraystretch{1}\begin{array}{cc} 3.7  & -0.6+0.8i\\ -0.6-0.8i & 0 \end{array}\right), &
A'_2 & = \left(\arraycolsep=1.4pt\def\arraystretch{1}\begin{array}{cc} 1 & 0\\ 0 & 0\\ \end{array}\right),\\
A'_3 & = \left(\arraycolsep=1.4pt\def\arraystretch{1}\begin{array}{cc} 0 & -0.6-0.8i\\ -0.6+0.8i & 3.6 \end{array}\right), &
A'_4 & = \left(\arraycolsep=1.4pt\def\arraystretch{1}\begin{array}{cc} 0 & -0.8+0.6i\\ -0.8-0.6i & 4.8 \end{array}\right)\\
b'_1 & = \left(\arraycolsep=1.4pt\def\arraystretch{1}\begin{array}{c} -1.25+1.25i\\ 0 \end{array}\right), &
b'_2 & = \left(\arraycolsep=1.4pt\def\arraystretch{1}\begin{array}{c} 0\\ 0 \end{array}\right),\\
b'_3 & = \left(\arraycolsep=1.4pt\def\arraystretch{1}\begin{array}{c} 0\\ -1.2+1.6i \end{array}\right), &
b'_4 & =\left(\arraycolsep=1.4pt\def\arraystretch{1}\begin{array}{c} 0\\ -1.6-1.2i \end{array}\right)
\end{align*}
Mathematically, this is a $\Cc^2\rightarrow \R^4$ map. We choose vector 
\begin{equation}
c_+=(0.7991, -0.3533, 0.3924, 0.2876).
\end{equation}
Analytically we find two vectors $c\in C_-$ associated with boundary non-convexity: $c=(0,1,0,0)$  and $c=(337/328, -27971/6560, 1, -321/328)$ (see accompanying Mathematica notebook {\tt article\_example04.nb}). First vector yields $z(c)=1.4512$, while second vector gives a much larger value. 
Starting with an initial guess $z^{\rm guess}_{\max}=1.7901$ the numerical algorithm returns $z_{\rm max}=1.4506$. The discrepancy in the third digit is due to numerical precision in the function {\tt is\_nonconvex}.


\paragraph{Examples 5. Artificial $\R^4\to \R^4$ map.}

{\em See file \underline{\tt examples/article\_example05.m}.}

\noindent We consider an $\R^4\to \R^4$ map
\begin{center}
$A_1={\mathbb I}$,
$A_2=\left(\begin{array}{cccc} 1 & 0 & 1 & 0 \\ 0 & 2 & -1 & 4 \\ 1 & -1 & 0 & 0 \\ 0 & 4 & 0 & 0 \\ \end{array}\right)$,
$A_3=\left(\begin{array}{cccc} 0 & 0 & 0 & -1 \\ 0 & 3 & -1 & 0\\ 0 & -1 & -1 & 0\\ -1 & 0 & 0 & -1 \end{array}\right)$,
$A_4=\left(\begin{array}{cccc} 4 & 0 & 1 & 2\\ 0 & 0 & 0 & 4\\ 1 & 0 & 0 & 0\\ 2 & 4 & 0 & -2 \end{array}\right)$;

$b_1=\left(\begin{array}{c} 0\\ 0\\ 0\\ 0 \end{array}\right)$,
$b_2=\left(\begin{array}{c} 1\\ 0\\ 0\\ 0 \end{array}\right)$,
$b_3=\left(\begin{array}{c} 1\\ 1\\ 0\\ 0 \end{array}\right)$,
$b_4=\left(\begin{array}{c} 0\\ 0\\ 0\\ 1 \end{array}\right)$;
$c_+=\left(\begin{array}{c} 1\\ 0\\ 0\\ 0 \end{array}\right)$.
\end{center}

\begin{figure}[h]
	\includegraphics[width=9cm]{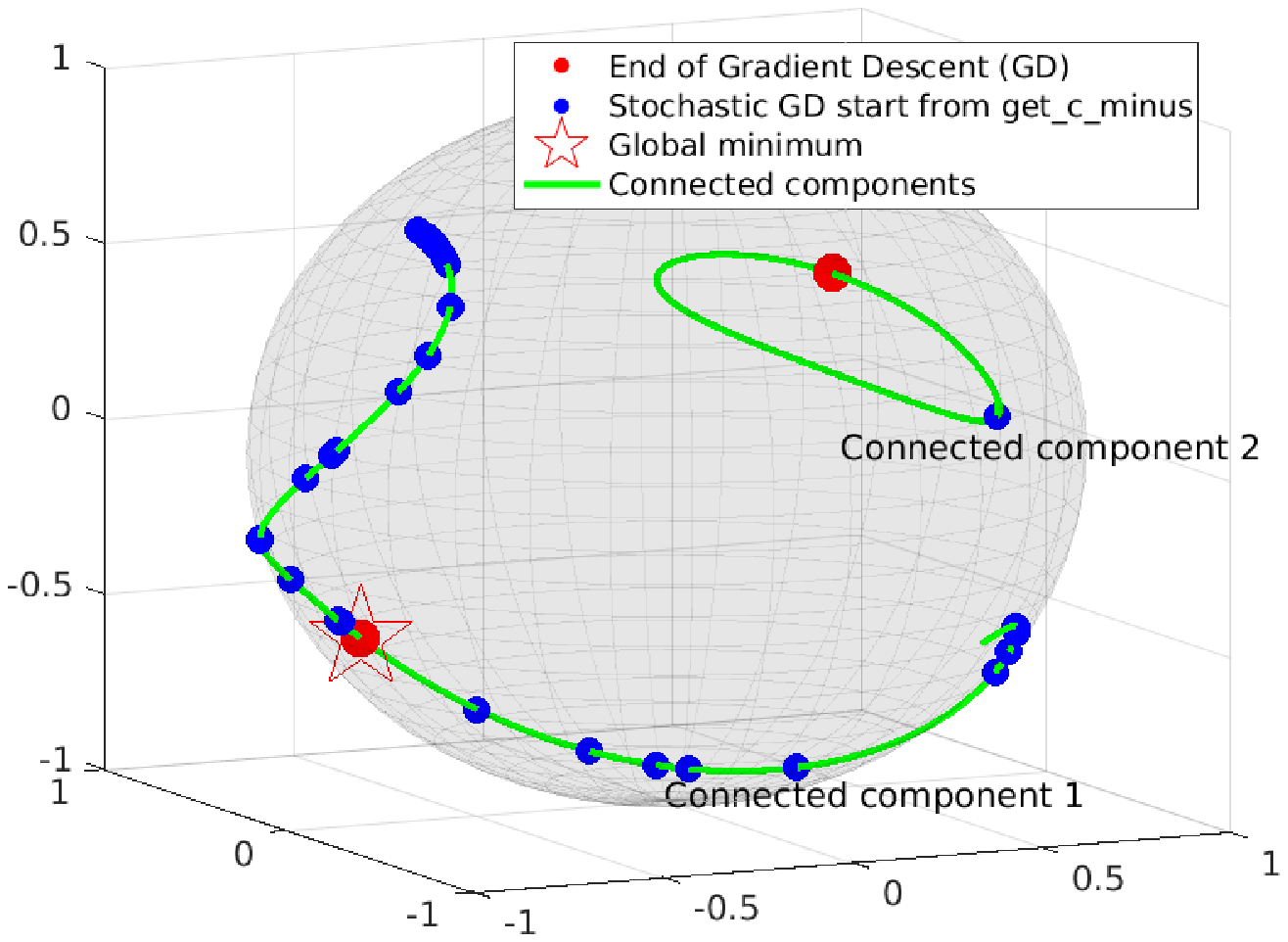}
	\includegraphics[width=9cm]{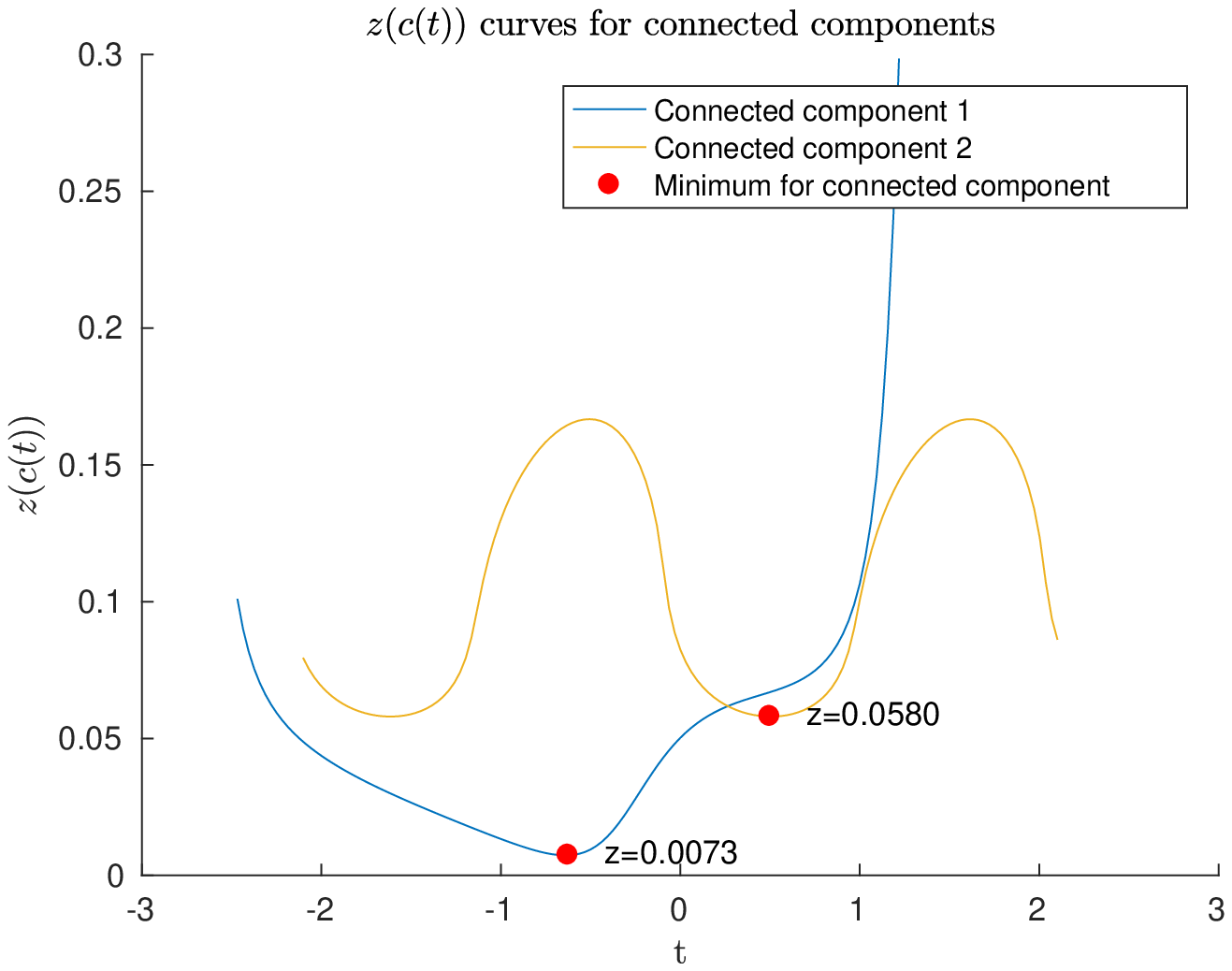}
\caption{Gradient descent along $C_-$ (left) and values of $z(c)$ (right) for the  $\R^4\to\R^4$  map of Example 5.
{\em Code for generating these figures is in the CAQM repository at {\tt\em examples/figures/article}}.}\label{fig:R4R4_05_c_minus}
\end{figure}
In the case of $\R^4\to \R^4$ map the set $C_-\subset {\mathbb S}^2$ is one dimensional, see section \ref{section:geometry}.
In this particular case it consists of at least two connected components. 
Therefore, this example tests the gradient descent method described in section \ref{contcase}. The results of a particular run are shown in Figure \ref{fig:R4R4_05_c_minus} (left) as a projection onto ${\mathbb S}^2$ orthogonal to $c_+$.
The numerical algorithm discovers plenty of starting points $c\in C_-$  for the gradient descent (shown in blue in Figure \ref{fig:R4R4_05_c_minus}). 
End points of the gradient descent for each connected component are colored red, and the global minimum is marked with a star.
These numerical results are compared with the semi-analytic results obtained as follows. When $C_-$ is one-dimensional the direction of the gradient $\nabla z(c)$ must be aligned with $\dot{c}\in \left(\mbox{Lin}\{c,c_+,n\}\right)^{\bot}$. Using the latter expression we numerically constructed $C_-$ and found it to be in agreement with the one obtained by the full algorithm.
Connected components obtained using this method are shown in green in Figure~\ref{fig:R4R4_05_c_minus} (left).
Finally, in Figure~\ref{fig:R4R4_05_c_minus} (right) we show a plot of $z(c(t))$ as a function of a parameter $t$ along $C_-$. This plot confirms that our algorithm correctly identifies the direction of the descent and chooses a global minimum of $z(c)$.

One of the components of $C_-$ has a topology of a ring, and another one is an open interval with end points satisfying $\Rg Q(c)=n-2$.
Our gradient descent algorithm terminates once the point $\Rg Q(c)=n-2$ is encountered. Numerically in this case the algorithm finds $z_{\rm max}=0.007325$.


\paragraph{Examples 6. Randomly generated $\R^4\to \R^4$ map.}
{\em See file \underline{\tt examples/article\_example06.m}.}
The map of the Example 5 was artificially constructed (with all coefficients being integers), which simplifies analytic analysis. 
Example 6 considers a randomly generated map 
\begin{center}
	$A_1=\left(\arraycolsep=1.4pt\def\arraystretch{1}
	\begin{array}{cccc}
	3.6434 & 1.1990 & 1.2652 & 0.7187\\
	1.1990 & 2.7936 & 1.0245 & 1.4263\\
	1.2652 & 1.0245 & 3.5808 & 1.3879\\
	0.7187 & 1.4263 & 1.3879 & 3.6670\\
	\end{array}\right)$,
	$A_2=\left(\arraycolsep=1.4pt\def\arraystretch{1}
	\begin{array}{cccc}
	1.0288 & 1.0841 & 1.3780 & 0.2665\\
	1.0841 & 0.8139 & 1.0672 & 0.9619\\
	1.3780 & 1.0672 & 1.0681 & 0.7686\\
	0.2665 & 0.9619 & 0.7686 & 0.9904\\
	\end{array}\right)$,\\
	$A_3=\left(\arraycolsep=1.4pt\def\arraystretch{1}
	\begin{array}{cccc}
	1.7014 & 1.1434 & 0.9301 & 1.2243\\
	1.1434 & 1.6308 & 1.7445 & 1.4684\\
	0.9301 & 1.7445 & 1.2251 & 1.7913\\
	1.2243 & 1.4684 & 1.7913 & 0.4557\\
	\end{array}\right)$,
	$A_4=\left(\arraycolsep=1.4pt\def\arraystretch{1}
	\begin{array}{cccc}
	1.4773 & 0.6695 & 1.1375 & 1.5947\\
	0.6695 & 1.2519 & 1.6432 & 1.3098\\
	1.1375 & 1.6432 & 1.5381 & 0.5984\\
	1.5947 & 1.3098 & 0.5984 & 0.2417\\
	\end{array}\right)$,\\
	$b_1=\left(\arraycolsep=1.4pt\def\arraystretch{1}
	\begin{array}{c}
	0.7689\\
	0.1673\\
	0.8620\\
	0.9899\\
	\end{array}\right)$,
	$b_2=\left(\arraycolsep=1.4pt\def\arraystretch{1}
	\begin{array}{c}
	0.1897\\
	0.4950\\
	0.1476\\
	0.0550\\
	\end{array}\right)$,
	$b_3=\left(\arraycolsep=1.4pt\def\arraystretch{1}
	\begin{array}{c}
	0.4981\\
	0.9009\\
	0.5747\\
	0.8452\\
	\end{array}\right)$,
	$b_4=\left(\arraycolsep=1.4pt\def\arraystretch{1}
	\begin{array}{c}
	0.8627\\
	0.4843\\
	0.8449\\
	0.2094\\
	\end{array}\right)$. \end{center}

\begin{figure}[H]
	\includegraphics[width=9cm]{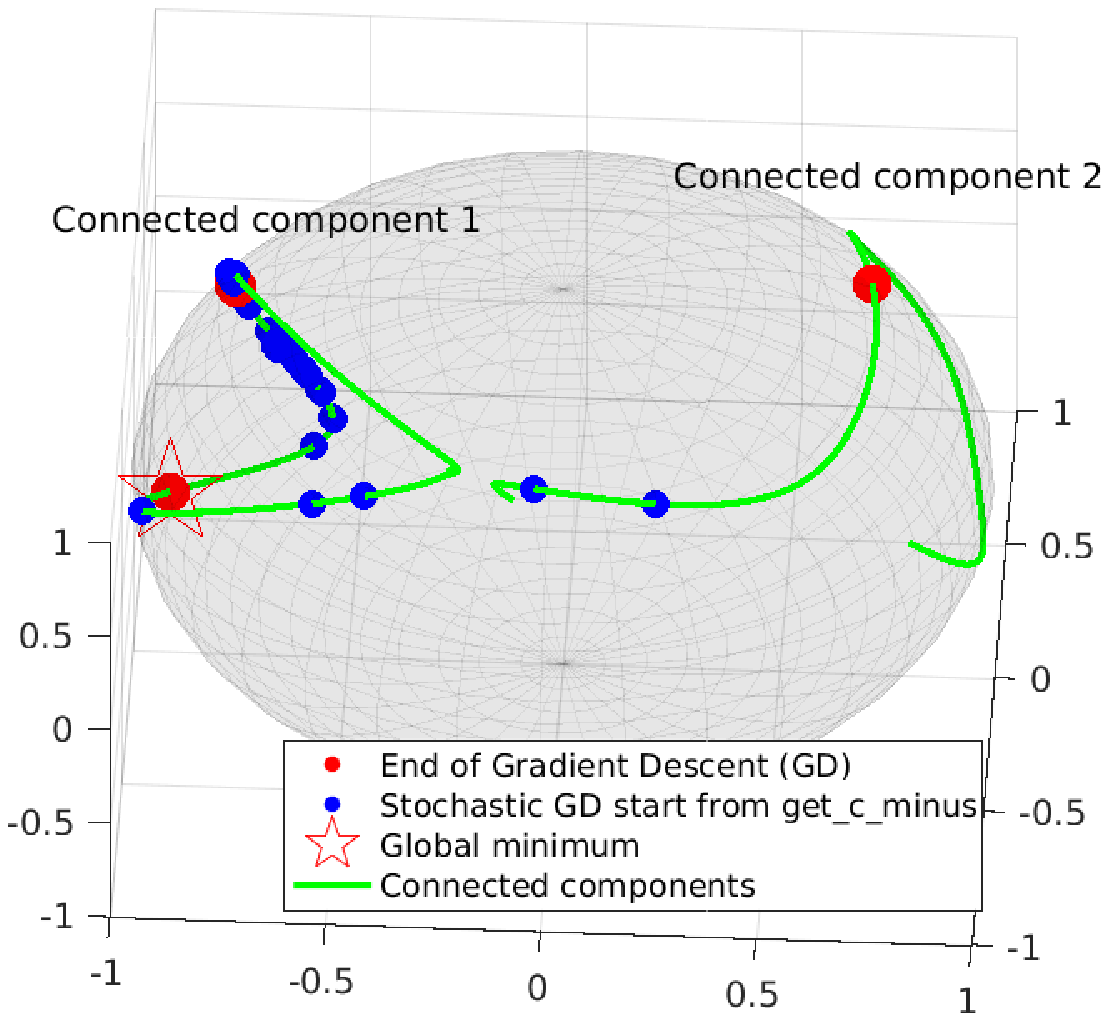}
	\includegraphics[width=9cm]{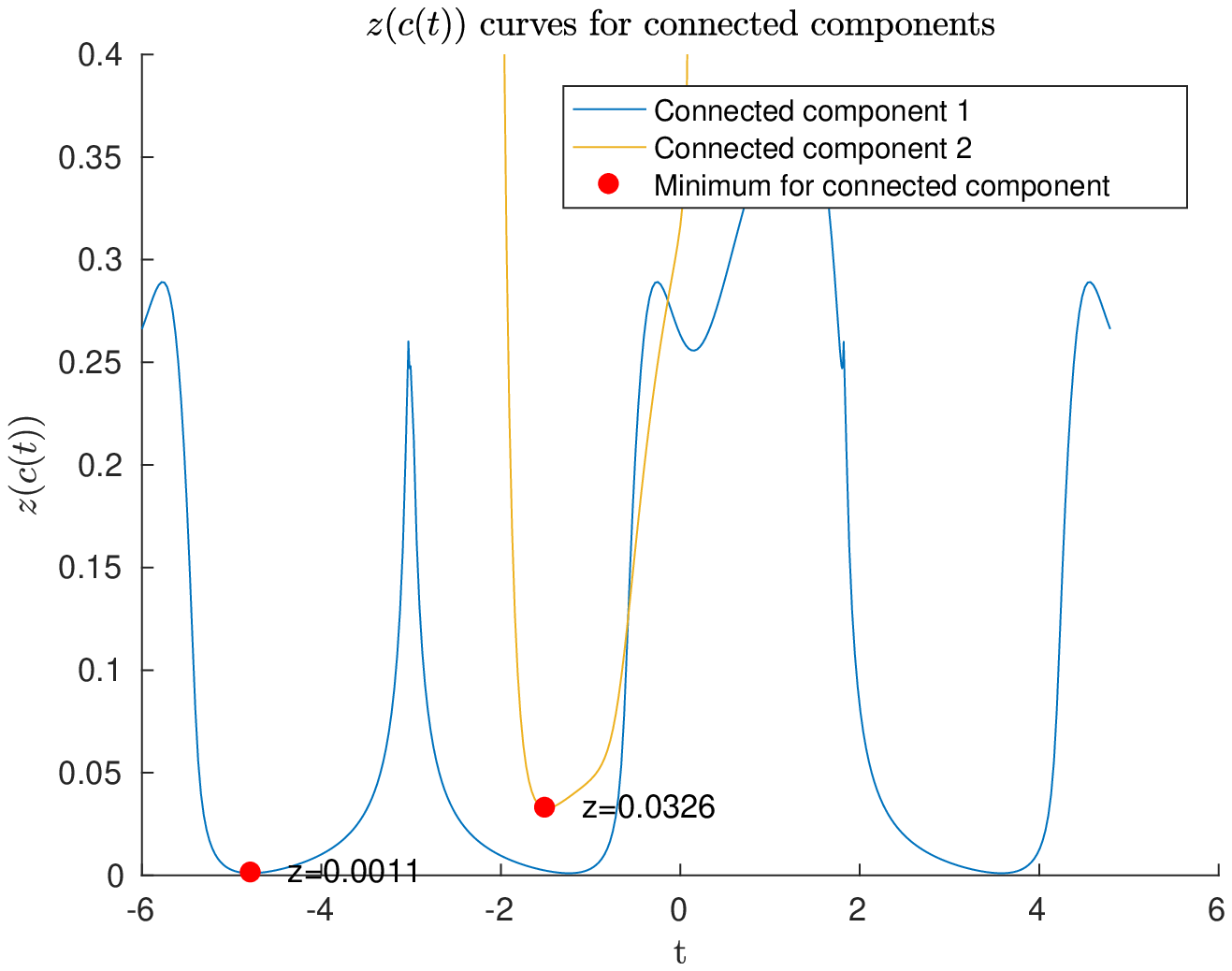}
\caption{Gradient descent along $C_-$ (left) and values of $z(c)$ (right) for the  $\R^4\to\R^4$  map of Example 6.
{\em Code for generating these figures is in the CAQM repository at {\tt\em examples/figures/article}}.}\label{fig:R4R4_06_c_minus}
\end{figure}

Figure \ref{fig:R4R4_06_c_minus} (left) shows the results of a particular run of the algorithm. 
The starting points are shown in blue and local minima are in red. The global minimum is denoted by a star.
The obtained $C_-$ is in a good agreement with the one obtained using $\dot{c}\in \left(\mbox{Lin}\{c,c_+,n\}\right)^{\bot}$ for the derivative along $C_-$ (shown in green).
The right panel of Figure \ref{fig:R4R4_06_c_minus} shows $z(c(t))$ for the two connected components of $C_-$.
One connected component has a loop topology, the other one is an interval with the end points with $\Rg Q(c)=n-2$. Numerical algorithm returns $z_{\rm max}=0.001059$ in this case.

\paragraph{Example 7. Artificial $\R^5\to \R^5$ map.}
{\em See file \underline{\tt examples/article\_example07.m}.}\\
\noindent The map is artificially-generated with all entries being integer, 
\begin{center}
$A_1=\left(\arraycolsep=1.4pt\def\arraystretch{1}
\begin{array}{ccccc}
-2 & 2 & 0 & -1 & 2 \\
2 & 0 & -1 & 0 & -2 \\
0 & -1 & -2 & 0 & 1 \\
-1 & 0 & 0 & -2 & -2 \\
2 & -2 & 1 & -2 & 2
\end{array}\right)$,
$A_2=\left(\arraycolsep=1.4pt\def\arraystretch{1}
\begin{array}{ccccc}
2 & 0 & 2 & 0 & -1 \\
0 & -2 & 0 & 0 & 0 \\
2 & 0 & -2 & -1 & 1 \\
0 & 0 & -1 & 0 & -2 \\
-1 & 0 & 1 & -2 & 0
\end{array}\right)$,
$A_3=\left(\arraycolsep=1.4pt\def\arraystretch{1}
\begin{array}{ccccc}
0 & -1 & -1 & 1 & 0 \\
-1 & 0 & -2 & -1 & 1 \\
-1 & -2 & 2 & 0 & 2 \\
1 & -1 & 0 & 0 & 1 \\
0 & 1 & 2 & 1 & 0
\end{array}\right)$,
$A_4=\left(\arraycolsep=1.4pt\def\arraystretch{1}
\begin{array}{ccccc}
-2 & 1 & 1 & 0 & 1 \\
1 & 0 & 2 & -1 & 1 \\
1 & 2 & -2 & 0 & 0 \\
0 & -1 & 0 & 0 & 0 \\
1 & 1 & 0 & 0 & 0
\end{array}\right)$,
$A_5=\left(\arraycolsep=1.4pt\def\arraystretch{1}
\begin{array}{ccccc}
5 & 2 & -1 & -1 & 2 \\
2 & 3 & -1 & 1 & 2 \\
-1 & -1 & 3 & -1 & -1 \\
-1 & 1 & -1 & 3 & 0 \\
2 & 2 & -1 & 0 & 3
\end{array}\right)$,\\
$b_1=\left(\arraycolsep=1.4pt\def\arraystretch{1}
\begin{array}{c}
-1 \\
1 \\
0 \\
0 \\
-1
\end{array}\right)$,
$b_2=\left(\arraycolsep=1.4pt\def\arraystretch{1}
\begin{array}{c}
0 \\
1 \\
-1 \\
1 \\
-1
\end{array}\right)$,
$b_3=\left(\arraycolsep=1.4pt\def\arraystretch{1}
\begin{array}{c}
1 \\
0 \\
0 \\
0 \\
1
\end{array}\right)$,
$b_4=\left(\arraycolsep=1.4pt\def\arraystretch{1}
\begin{array}{c}
1 \\
-1 \\
-1 \\
-1 \\
0
\end{array}\right)$,
$b_5=\left(\arraycolsep=1.4pt\def\arraystretch{1}
\begin{array}{c}
1 \\
1 \\
1 \\
-1 \\
1
\end{array}\right)$.
\end{center}
In this case $C_-$ is two-dimensional and we do not attempt to fully study it analytically. 
Rather we focus on the numerical tests of the proposed algorithms. First we test the boundary oracle starting from $y=0$ and $d$ shown below, which yields the distance to the boundary $t=0.1196$. 
Calling the algorithm to discover boundary non-convexities returns a non-trivial $c_-$,  certifying that the image is non-convex.

The corresponding map is definite. Using $c_+=(0.1326,-0.3859,0.1932,-0.6408,0.6209)$ and a default initial guess value $z^{\rm guess}_{\max}=137.5$ the algorithm 
performs $k=100$ iterations looking for vectors from $C_-$, identifies ten such vectors belonging to two  continuous components of $C_-$ and performs gradient descent yielding minimal $z_1=0.0935$ and $z_2=1.8862$ for each. The algorithm returns global minimum value $z_{\max}=0.0935$.

As a part of this example the algorithm also performs consistency check by generating random points $f(x)$ and asserting that they are correctly identified by {\tt infeasibility\_oracle}. 

\begin{center}
$d=\left(\arraycolsep=1.4pt\def\arraystretch{1}\begin{array}{c}
-1\\
-2\\
-3\\
-4\\
-5\\
\end{array}\right)$,
$c=\left(\arraycolsep=1.4pt\def\arraystretch{1}\begin{array}{c}
-0.0128\\
0.1989\\
0.1827\\
0.3844\\
0.8827
\end{array}\right)$,
$c_-=\left(\arraycolsep=1.4pt\def\arraystretch{1}\begin{array}{c}
-0.3136\\
0.1355\\
-0.1169\\
-0.3933\\
0.8456
\end{array}\right)$,
$c_+=\left(\arraycolsep=1.4pt\def\arraystretch{1}\begin{array}{c} 0.1326\\-0.3859\\ 0.1932\\ -0.6408\\ 0.6209\end{array}\right)$.
\end{center}
\paragraph{Example 8. Artificial $\mathbb{C}^3\to \R^5$ map.}
{\em See file \underline{\tt examples/article\_example08.m}.}\\ 
\noindent We study a $\mathbb{C}^3\to \R^5$ map
\begin{center}
	$A_1=\left(\arraycolsep=1.4pt\def\arraystretch{1}
	\begin{array}{ccc}
	-2 & 1 & 1\\
	1 & 2 & 1-i\\
	1 & 1+i & 2\\
	\end{array}\right)$,
	$A_2=\left(\arraycolsep=1.4pt\def\arraystretch{1}
	\begin{array}{ccc}
	-2 & -2 & 2+2i\\
	-2 & 2 & i\\
	2-2i & -i & 0\\
	\end{array}\right)$,
	$A_3=\left(\arraycolsep=1.4pt\def\arraystretch{1}
	\begin{array}{ccc}
	2 & -1-i & -1-2i\\
	-1+i & 0 & -1-i\\
	-1+2i & -1+i & -2\\
	\end{array}\right)$,
	$A_4=\left(\arraycolsep=1.4pt\def\arraystretch{1}
	\begin{array}{ccc}
	-2 & -1-2i & -i\\
	-1+2i & 0 & -1+i\\
	i & -1-i & 2\\
	\end{array}\right)$,
	$A_5=\left(\arraycolsep=1.4pt\def\arraystretch{1}
	\begin{array}{ccc}
	7 & -i & 0\\
	i & 5 & -i\\
	0 & i & 7\\
	\end{array}\right)$,
	$b_1=\left(\arraycolsep=1.4pt\def\arraystretch{1}
	\begin{array}{c}
	0\\
	0\\
	0\\
	\end{array}\right)$.
	$b_2=\left(\arraycolsep=1.4pt\def\arraystretch{1}
	\begin{array}{c}
	0\\
	-1\\
	1\\
	\end{array}\right)$.
	$b_3=\left(\arraycolsep=1.4pt\def\arraystretch{1}
	\begin{array}{c}
	1\\
	0\\
	1\\
	\end{array}\right)$.
	$b_4=\left(\arraycolsep=1.4pt\def\arraystretch{1}
	\begin{array}{c}
	0\\
	0\\
	1\\
	\end{array}\right)$.
	$b_5=\left(\arraycolsep=1.4pt\def\arraystretch{1}
	\begin{array}{c}
	-1\\
	-1\\
	1\\
	\end{array}\right)$.
	\end{center}
Running ${\tt get\_z\_max}$ with $z^{\rm guess}_{\max}=1$ and $k=300$ results in $z_{\max}=0.00768$.






\paragraph{Example 9. Artificial $\mathbb{C}^3\to \R^6$ map.}
{\em See file \underline{\tt examples/article\_example09.m}.}\\
\noindent We study an artificial $\mathbb{C}^3\to \R^6$ map
\begin{center}
	$A_1=\left(\arraycolsep=1.4pt\def\arraystretch{1}
	\begin{array}{ccc}
	-2 & 1 & 1\\
	1 & 2 & 1-1i\\
	1 & 1+1i & 2\\
	\end{array}\right)$,
	$A_2=\left(\arraycolsep=1.4pt\def\arraystretch{1}
	\begin{array}{ccc}
	-2 & -2 & 2+2i\\
	-2 & 2 & 1i\\
	2-2i & -1i & 0\\
	\end{array}\right)$,
	$A_3=\left(\arraycolsep=1.4pt\def\arraystretch{1}
	\begin{array}{ccc}
	2 & -1-1i & -1-2i\\
	-1+1i & 0 & -1-1i\\
	-1+2i & -1+1i & -2\\
	\end{array}\right)$,
	$A_4=\left(\arraycolsep=1.4pt\def\arraystretch{1}
	\begin{array}{ccc}
	-2 & -1-2i & -1i\\
	-1+2i & 0 & -1+1i\\
	1i & -1-1i & 2\\
	\end{array}\right)$,
	$A_5=\left(\arraycolsep=1.4pt\def\arraystretch{1}
	\begin{array}{ccc}
	2 & -1i & 0\\
	1i & 0 & -1i\\
	0 & 1i & 2\\
	\end{array}\right)$,
	$A_6=\left(\arraycolsep=1.4pt\def\arraystretch{1}
	\begin{array}{ccc}
	5 & -1i & 1\\
	1i & 3 & 1-2i\\
	1 & 1+2i & 7\\
	\end{array}\right)$,
	$b_1=\left(\arraycolsep=1.4pt\def\arraystretch{1}
	\begin{array}{c}
	0\\
	1\\
	0\\
	\end{array}\right)$.
	$b_2=\left(\arraycolsep=1.4pt\def\arraystretch{1}
	\begin{array}{c}
	-1\\
	-1\\
	0\\
	\end{array}\right)$.
	$b_3=\left(\arraycolsep=1.4pt\def\arraystretch{1}
	\begin{array}{c}
	0\\
	0\\
	1\\
	\end{array}\right)$.
	$b_4=\left(\arraycolsep=1.4pt\def\arraystretch{1}
	\begin{array}{c}
	-1\\
	-1\\
	1\\
	\end{array}\right)$.
	$b_5=\left(\arraycolsep=1.4pt\def\arraystretch{1}
	\begin{array}{c}
	1\\
	1\\
	-1\\
	\end{array}\right)$.
	$b_6=\left(\arraycolsep=1.4pt\def\arraystretch{1}
	\begin{array}{c}
	-1\\
	-1\\
	-1\\
	\end{array}\right)$.
	\end{center}
Starting with $z^{\rm guess}_{\max}=0.1$ and $k=100$, running ${\tt get\_z\_max}$ yields $z_{\max}=0.0335$.







\paragraph{Example 10. Homogeneous $\R^4\to \R^4$ map.}
{\em See file \underline{\tt examples/article\_example10.m}.}
\noindent We study an artificial homogeneous $\R^4\to \R^4$ map
\begin{center}
\label{example10}
	$A_1=\left(\arraycolsep=1.4pt\def\arraystretch{1}
	\begin{array}{cccc}
	0 & 1 & 0 & 0 \\
	1 & 0 & 0 & 0 \\
	0 & 0 & 0 & 1 \\
	0 & 0 & 1 & 0   
	
	\end{array}\right)$,
	$A_2=\left(\arraycolsep=1.4pt\def\arraystretch{1}
	\begin{array}{cccc}
	0 & 0 & 1 & 0 \\
	0 & 2 & 0 & 1 \\
	1 & 0 & 2 & 0 \\
	0 & 1 & 0 & 0   
	
	\end{array}\right)$,
	$A_3=\left(\arraycolsep=1.4pt\def\arraystretch{1}
	\begin{array}{cccc}
	0 & 0 & 0 & 1 \\
	0 & -1 & 1 & 0 \\
	0 & 1 & 1 & 0 \\
	1 & 0 & 0 & 0   
	
	\end{array}\right)$,
	$A_4=\left(\arraycolsep=1.4pt\def\arraystretch{1}
	\begin{array}{cccc}
	1 & 0 & 0 & 0 \\
	0 & 1 & 0 & 0 \\
	0 & 0 & 1 & 0 \\
	0 & 0 & 0 & 1   
	
	\end{array}\right)$,
\end{center}
and $b_i=0$.
This map is definite. We choose $c_+=(0,0,0,1)^T$, thus equating convexity of $F$ of this map with the convexity of joint numerical range of matrices $A_i$, $i=1,2,3$,
\begin{align}
\label{JNR}
y_i=x^T A_i x,\quad |x|^2=1.
\end{align}
Running {\tt nonconvexity\_certificate.m} confirms that $F$ is non-convex. The same can be established analytically, for example, by plotting the intersection of $F$ with the hyperplane $y_3=0$, see Figure~\ref{fig:example10}.

Since the image $F$ of a homogeneous map is a cone, the algorithm of section \ref{sec:convexsubpart} to identify a convex compact subpart of $F$ by "cutting" it with a hyperplane will not work. Hence the routine 
{\tt get\_z\_max} will return an exception in case matrix $b$ is zero or trivial in the sense of section \ref{Notations}. 


\begin{figure}[H]
\centering\includegraphics[width=8cm]{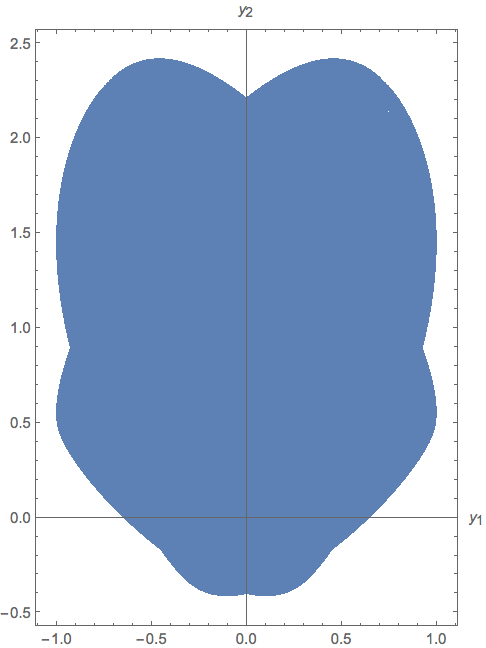}
\caption{The intersection of $F$, the image of the map \eqref{example10}, and the hyperplane $y_3=0$.}
\label{fig:example10}

\end{figure}





\section{Conclusion}
\label{conclusion}
In this paper we address a number of problems pertaining to the geometry of quadratic maps.
We consider general real and complex quadratic maps of the form \eqref{real} or \eqref{complex} and address the following tasks linked with the image of the map $F$. 
\begin{itemize}
\item Feasibility oracle: certifying that a given point (does not) belong to an image $F$ of a given quadratic map
\item Boundary oracle: finding a boundary point $y\in \partial F$ which lies on a given line
\item Convexity oracle: certifying that an image $F$ of a given quadratic map is non-convex
\item Convexity of a sub-region: finding a subregion of non-convex $F$ which is convex
\end{itemize}
From an algorithmic point of view these problems are not convex and some of them are known to be NP-hard. 
Our approach was to employ "hidden convexity" of quadratic maps, an observation that convex relaxation of various quadratic optimization problems often yields robust results. Hidden convexity allows us to reformulate feasibility and boundary oracles as standard problems of convex optimization \cite{PolyakGryazina17}. Another important observation is the result of \cite{Dymarsky_cuttingArX} that the image of quadratic map $F$ is convex if and only if it has no boundary non-convexities. Using this result we formulate convexity oracle and the problem of finding convex sub-region 
as the problem of finding boundary non-convexities. The latter problem can be efficiently addressed stochastically, yielding a finite probability of identifying boundary non-convexities, if any. 

In this paper we provide a detailed description of the proposed algorithms, together with the necessary mathematical foundations. The paper is accompanied by a MATLAB library CAQM (Convexity Analysis of Quadratic Maps), which implements the algorithms. Section \ref{num_res} of this paper contains an extensive discussion of ten numerical examples outlining functionality and efficiency of the library.


The MATLAB library CAQM is available at Github:
\href{http://github.com/sergeivolodin/CAQM}{github.com/sergeivolodin/CAQM}.

\section*{Acknowledgements}
The authors are thankful to Prof.~Janusz Bialek.
We gratefully acknowledge collaboration between the Institute for Control Sciences RAS and the Center for Energy Systems of Skolkovo Institute of Science and Technology.

\appendix

\section{Continuous case: gradient and normal}
\label{sec:gradient}
We consider a quadratic matrix $Q(t)$ smoothly depends on parameter and assume that $\dim\Ker(Q)=1$ for all $t$. By $x_0(t)$ we denote a normalized vector $x_0\in \Ker(Q)$ and $Q^{-1}$ stands for the pseudo-inverse
\begin{eqnarray}
\label{Qx}
Qx_0&=&0\ ,\\
Q^{-1}Q&=&QQ^{-1}={\mathbb I}-x_0 x_0^*\ .
\label{QQ}
\end{eqnarray}
After differentiating \eqref{Qx} and \eqref{QQ} by the parameter $t$ we find
\begin{eqnarray}
\label{xdot}
\dot{x}_0&=&-Q^{-1}Qx_0\ ,\\
\frac{d}{d t}Q^{-1}&=&-Q^{-1}\dot{Q}Q^{-1}+x_0 (x_0^*\dot{Q}Q^{-2})+(Q^{-2}\dot{Q}x_0) x_0^*\ .
\label{Qinvdot}
\end{eqnarray}

To make the connection with section \ref{contcase} we use $Q(t)=Q(c(t))$ and \eqref{q} to write
\begin{equation}
\dot{Q}=\dot{c}\cdot A-x_0^*(\dot{c}\cdot A)x_0 {\mathbb I}\ ,
\end{equation}
where we have used $\frac{d}{d t} \lambda_{\rm min}(c\cdot A)=x_0^*(\dot{c}\cdot A)x_0$, which follows from $(c\cdot A)x_0=\lambda_{\rm min}(c\cdot A)x_0$.

The condition $x_0^* (c\cdot b)=0$ after differentiating over $t$ and combining with \eqref{xdot} gives the expression for $n_i$ \eqref{q},
\begin{equation}
\sum_i \dot{c}_i (x_0^* q_i)=0\ ,\quad q_i=b_i -(A_i-(x_0^* A_i x_0){\mathbb I})Q(c)^{-1}(c\cdot b)\ .
\end{equation}

Similarly, after differentiating $z(t)=z(c(t))$ with respect to $t$ and using \eqref{Qx} and $Q^{-1}x_0=0$, as well as \eqref{Qinvdot}, we obtain
\begin{equation}
\dot{z}=2\Re(v^*Q^{-1}(\dot{c}\cdot b-\dot{Q}v))
\end{equation}
From here it follows
\begin{equation} \frac{\partial z}{\partial c_i}=2\Re(v^*Q^{-1}q_i)\ . \end{equation}

\section{Boundary non-convexities in homogeneous case}
\label{non_convex_homogeneous}

In this section the quadratic map $f\colon\mathbb{V}^n\to\mathbb{R}^m$
is homogeneous, meaning that the linear part in the definition (\ref{real}) or (\ref{complex}) is zero:
\begin{equation}
b_k\equiv 0\in\mathbb{V}^n,\, k=1...m
\end{equation}
Thus, the map f has the form:
\begin{equation}
f_k(x)=x^* A_k x,\,A_k^*=A_k,\,k=1..m
\end{equation}

By reasons mentioned in the Section \ref{boundary_nonconvexity}, the Proposition \ref{th:noconv_cert} is not applicable for homogeneous case.
This gives rise to a new

\begin{proposition}[Sufficient condition for non-convexity of $\partial F_c$ in homogeneous case]
\label{th:noconv_cert_homogeneous}
If for a homogeneous quadratic map $f$ with $m\geqslant 3,\,n\geqslant 2$ and some $c$, matrix $c\cdot A$ is singular, positive semi-definite, and $\Ker(c\cdot A)$ is 2-dimensional with a basis $x_0,x_1$, moreover, vectors $u_k=x_0^* A_k x_0$, $v_k=x_1^* A_k x_1$, $w_k=x_0^* A_k x_1$ are linearly independent, then
$$
\partial F_c=f(\Ker(c\cdot A))=\{f(x)\,\big|\,x=t_0 x_0+t_1x_1,\,t_0,\,t_1\in\Cc\} \mbox{ is non-convex.}
$$

As in the Proposition \ref{th:noconv_cert} for a non-homogeneous map, in case of a complex map, vectors $\Re w$ and $\Im w$ should be considered instead of just one vector $w$.
\end{proposition}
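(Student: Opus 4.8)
The plan is to first identify the set $\partial F_c$ explicitly, and then to exhibit two of its points whose midpoint provably falls outside it. Since $f$ is homogeneous and $c\cdot A\succeq 0$, for every $x\in\mathbb{V}^n$ we have $c\cdot f(x)=x^*(c\cdot A)x\ge 0$, with equality precisely when $(c\cdot A)x=0$. Hence the minimum of $c\cdot y$ over $y\in F$ equals $0$ (attained at $y=f(0)=0$), the hyperplane $\{y\,|\,c\cdot y=0\}$ supports $F$, and
\[\partial F_c=\argmin_{y\in F}(c\cdot y)=\{f(x)\,|\,x\in\Ker(c\cdot A)\}\ ,\]
which is exactly the set displayed in the statement once we fix the basis $x_0,x_1$ of the two-dimensional space $\Ker(c\cdot A)$.

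Next I would expand $f$ on the kernel. Writing $x=t_0x_0+t_1x_1$ and using $A_k=A_k^*$ gives
\[f_k(t_0x_0+t_1x_1)=|t_0|^2u_k+|t_1|^2v_k+2\Re(\overline{t_0}\,t_1\,w_k)\ ,\]
where $u_k,v_k$ are real. Thus $\partial F_c$ lies in the linear span of $u,v$ and (in the real case) $w$, or of $u,v,\Re w,\Im w$ (in the complex case). The linear independence hypothesis is precisely what makes the coefficients $|t_0|^2,|t_1|^2$ and $\Re(\overline{t_0}t_1),\Im(\overline{t_0}t_1)$ recoverable from a point of $\partial F_c$; geometrically this identifies $\partial F_c$, via an injective linear map, with the quadric cone $\{(a,b,\gamma)\,|\,a,b\ge 0,\ \gamma^2=4ab\}$ in the real case and $\{(a,b,\alpha,\beta)\,|\,a,b\ge 0,\ \alpha^2+\beta^2=4ab\}$ in the complex case.

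Finally I would test convexity on the two points $f(x_0)=u$ and $f(x_1)=v$, both lying in $\partial F_c$. If their midpoint $\tfrac12(u+v)$ belonged to $\partial F_c$, then $\tfrac12(u+v)=|t_0|^2u+|t_1|^2v+2\Re(\overline{t_0}t_1 w)$ for some $t_0,t_1$. Matching coefficients against the independent tuple forces $|t_0|^2=|t_1|^2=\tfrac12$ together with $\Re(\overline{t_0}t_1)=\Im(\overline{t_0}t_1)=0$ in the complex case, or $2t_0t_1=0$ in the real case. Either way $\overline{t_0}t_1=0$, contradicting $t_0,t_1\neq 0$. Hence $\tfrac12(u+v)\notin\partial F_c$, and the set is non-convex.

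The expansion of $f_k$ and the reality of $u_k,v_k$ are routine; the step that requires care is the coefficient-matching, where linear independence of the full tuple --- three vectors in the real case, four in the complex case, which is exactly why $m\ge 3$ respectively $m\ge 4$ is needed --- is the hypothesis that rules out any representation of the midpoint. I expect the only genuine subtlety to be the complex bookkeeping: one must split $2\Re(\overline{t_0}t_1 w)=2\Re(\overline{t_0}t_1)\,\Re w-2\Im(\overline{t_0}t_1)\,\Im w$ correctly, so that it is the independence of $\Re w$ and $\Im w$ (and not of $w$ alone, as in Proposition \ref{th:noconv_cert}) that is being invoked.
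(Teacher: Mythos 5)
Your proof is correct and follows essentially the same route as the paper's: expand $f$ on $\Ker(c\cdot A)$, take the midpoint of $u=f(x_0)$ and $v=f(x_1)$, and use linear independence of $u$, $v$, $\Re w$, $\Im w$ to force coefficients that are mutually contradictory (the paper phrases the contradiction in polar form, $\cos^2\varphi+\sin^2\varphi=0$, while you phrase it as $\overline{t_0}t_1=0$ versus $|t_0|=|t_1|=1/\sqrt{2}$, which is the same thing). The only addition is that you also verify the identification $\partial F_c=f(\Ker(c\cdot A))$ via $c\cdot f(x)=x^*(c\cdot A)x\ge 0$, a step the paper absorbs into the statement of the proposition.
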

\begin{proof}
Consider a point $f(x)$ from $\partial F_c$: $f(t_0 x_0+t_1x_1)=r_0^2u+r_1^2v+2r_0r_1\cos\varphi \Re w-2r_0r_1\sin\varphi\Im w$, where $r_i=|t_i|,\,i\in0,\,1$, $t_0^* t_1=r_0r_1e^{i\varphi}$.
Obviously, $u$ and $v$ belong to the $\partial F_c$ (take $t_0=1$, $t_1=0$ for $u$).
Assuming that $\partial F_c$ is convex, the point $\frac{u+v}{2}$ should also belong to $\partial F_c$.
Then for some $r_i$ and $\varphi$,
$$
\left(r_0^2-\frac{1}{2}\right)u+\left(r_1^2-\frac{1}{2}\right)v+2r_0r_1\cos\varphi \Re w-2r_0r_1\sin\varphi\Im w=0
$$
Since vectors $u,\,v,\,\Re w,\,\Im w$ are linearly independent, all the coefficients should be equal to zero.
This leads to a contradiction: $\cos^2\varphi+\sin^2\varphi=0$ \lightning.
Thus, $\partial F_c$ is non-convex.
\end{proof}

In the homogeneous case, the set of non-convexities used in the numerical algorithm (\ref{cminorig}) is defined in a different way, according to the corresponding Proposition \ref{th:noconv_cert_homogeneous}:
\begin{equation}
C_-=\{c\in\mathbb{R}^m\,\big|\,c\cdot A\succeq 0,\,\dim(\Ker(c\cdot A))=2\}\label{cminusnew}
\end{equation}

The condition on the vectors $u,\,v,\,w$ in Proposition \ref{th:noconv_cert_homogeneous} being linearly dependent is deliberately not considered in the new definition (\ref{cminusnew}) of $C_-$ for the same reason as the corresponding similar condition is not considered in the non-homogeneous case, namely, because such a case is rare, as it is argued in the Section \ref{boundary_nonconvexity}.

\end{document}